\documentclass[12pt,reqno,a4paper]{amsart} 
\usepackage{graphicx}
\usepackage{amssymb,amsmath}
\usepackage{amsthm}
\usepackage{color,graphicx}
\usepackage{hyperref}
\usepackage{color}
\usepackage{mathabx}
\usepackage{subfigure}
\usepackage[pagewise]{lineno}

\usepackage{subfigure} 

\usepackage{verbatim} 

\usepackage{relsize} 


\setlength{\textwidth}{16cm} \setlength{\textheight}{22 cm}
\addtolength{\oddsidemargin}{-1.5cm} \addtolength{\evensidemargin}{-1.5cm}

\newcommand{\be}{\begin{equation}}

\newcommand{\ee}{\end{equation}}

\newcommand{\R}{{\mathbb R}}

\newcommand{\Ldois}{{\mathcal{L}_2}}

\newcommand{\Ker}{{\rm \,Ker}}

\newcommand{\bigslant}[2]{{\raisebox{.1em}{$#1$}\left/\raisebox{-.1em}{$#2$}\right.}}


\numberwithin{equation}{section}
\numberwithin{figure}{section}

\newtheorem{theorem}{Theorem}[section]
\newtheorem{proposition}[theorem]{Proposition}
\newtheorem{remark}[theorem]{Remark}
\newtheorem{lemma}[theorem]{Lemma}

\newtheorem{definition}[theorem]{Definition}

\begin{document}
\vglue-1cm \hskip1cm
\title[existence and spectral stability for the 2D-NLS]{Existence and spectral stability of small-amplitude periodic waves for the 2D nonlinear focusing Schr\"odinger equation}

\begin{center}

\subjclass[2000]{35B10, 35B32, 35B35  }

\keywords{Existence of small-amplitude periodic waves, existence of periodic minimizers, Schr\"odinger equation, spectral stability.}

\maketitle

{\bf F\'abio Natali}

{Departamento de Matem\'atica - Universidade Estadual de Maring\'a\\
Avenida Colombo, 5790, CEP 87020-900, Maring\'a, PR, Brasil.}\\
{ fmanatali@uem.br}




\vspace{3mm}

\end{center}

\begin{abstract} The purpose of this paper is to establish the existence and spectral stability, with respect to perturbations of the same period, of double-periodic standing waves for the nonlinear focusing Schrödinger equation posed on the bi-dimensional torus. We first show that such double-periodic solutions can be constructed via local and global bifurcation theory, under the assumption that the kernel of the linearized operator around the equilibrium solution is one-dimensional. In addition, we prove that these local and global solutions minimize an appropriate variational problem, which enables us to derive spectral properties of the linearized operator about the periodic wave. Finally, we establish the spectral stability of small-amplitude periodic waves by applying the techniques developed in \cite{KapitulaKevrekidisSandstedeII} and \cite{KapitulaKevrekidisSandstedeI}.
\end{abstract}

\section{Introduction} 
\indent Consider the general bi-dimensional nonlinear focusing Schr\"odinger equation 
\begin{equation}\label{nls}
	iu_t+\Delta u +|u|^pu=0,
	\end{equation}
	where $p\in\mathbb{N}$, $t\in\mathbb{R}$, and $u=u(x,y,t)\in\mathbb{C}$ is a double $2\pi-$periodic function in the first two variables. This means that the pair $(x,y)\in\mathbb{R}\times\mathbb{R}$ can be considered in the bi-torus $\mathbb{T}\times\mathbb{T}$, where the functions are $2\pi-$periodic in both variables $x$ and $y$.\\ 
	\indent Regarding equation $(\ref{nls})$ defined on a periodic domain, a pioneering contribution was made by Bourgain in \cite{bourgain1}, where the author established both local and global results for the associated Cauchy problem in the more general periodic domain $\mathbb{T}^n = \mathbb{T} \times \cdots \times \mathbb{T}$, with $n \in \mathbb{N}$. The cornerstone of this seminal work is the identification of the sharp threshold for the index $\alpha \in \mathbb{R}$ such that, for initial data $u_0 \in H_{\rm per}^{\alpha}(\mathbb{T}^n)$, the solution $u(t)$ remains in $H_{\rm per}^{\alpha}(\mathbb{T}^n)$ for all $t$ in a suitable time interval. Additional contributions to this topic can be found in \cite{Cazenave}, \cite{erdogan}, and \cite{ginibre}.\\
	\indent  The important qualitative aspects concerning equation $(\ref{nls})$, which address spectral stability, modulational stability, and orbital stability, have significant contributions in the case $n = 1$ with the power nonlinearity $p = 2$. In fact, we have the work of Angulo \cite{Angulo}, where the author determined results on the orbital stability of positive and periodic standing waves. To achieve this, the author combined classical Floquet theory for the associated Hill operators around the periodic wave with the stability approaches in \cite{GrillakisShatahStraussI} and \cite{WeinsteinNLS}. Deconinck and Upsal \cite{DeconinckUpsal} used the integrability of equation $(\ref{nls})$, still in the case $p = 2$, to establish orbital stability results for the same waves with respect to subharmonic perturbations in the space of continuous bounded functions.\\ 
	\indent In Gustafson \textit{et al.} \cite{GustafsonLecozTsai}, the authors obtained periodic wave solutions that change sign for the case $p = 2$ using a variational method, and they proved spectral stability results with respect to perturbations of the same period $L > 0$ and orbital stability results in the space of anti-periodic functions with period $L/2$. The orbital stability of the same periodic waves was also determined by Natali \textit{et al.} in \cite{NMLP2021}. However, the authors have restricted the analysis over the Sobolev space $H_{\rm per}^1$ constituted by zero mean periodic functions.\\
	\indent Concerning the stability of small-amplitude periodic waves for $n = 1$, Gallay and H\u ar\u agu\c s  \cite{gallay1} considered the equation.
	\begin{equation}\label{focdef}
		iu_t + u_{xx} + \gamma |u|^2 u = 0,
	\end{equation}
	where $\gamma = 1$ and $\gamma = -1$ correspond to the focusing and defocusing cases, respectively. In both cases, the authors show that periodic standing waves are orbitally stable within the class of solutions having the same period. Spectral stability with respect to bounded or localized perturbations was also reported. In fact, they show that small-amplitude waves are stable in the defocusing case, but unstable in the focusing case.\\
	\indent For the case $p = 4$, Angulo and Natali in \cite{AN2} showed the existence of a unique critical frequency $c^{*} > 0$ such that all positive and periodic waves are orbitally stable for all $c \in \left(\frac{\pi^2}{L^2}, c^{*}\right)$ and orbitally unstable for all $c \in (c^{*}, +\infty)$. The approach used by the authors was based on \cite{GrillakisShatahStraussI}. Regarding periodic waves that change their sign, Moraes and de Loreno in \cite{gabriel-guilherme} established orbital instability results using the abstract approach in \cite{ShatahStraussbook}, which improves the instability results found in \cite{grillakis2}.
	 \\
\indent Next, we describe the main topics of our paper. Standing wave solutions of the form $u(x,y,t)=e^{ict}\varphi(x,y)$ can be determined by substituting this special form in $(\ref{nls})$ to obtain the elliptic equation
	\begin{equation}\label{ode}
		-\Delta \varphi+c\varphi-\varphi^{p+1}=0,\end{equation}
		where $c\in\mathbb{R}$ is called frequency of the wave $\varphi$. In addition, the bi-dimensional NLS equation  admits formally the following conserved quantities, which are useful in our spectral stability analysis:
		\begin{equation}\label{Eu}
			E(u) = \frac{1}{2} \int_{\mathbb{T}\times \mathbb{T}} |\nabla u|^2dxdy - \frac{1}{p+2} \int_{\mathbb{T}\times \mathbb{T}} |u|^{p+2} dxdy,
		\end{equation}
		\begin{equation}\label{Fu}
			F(u)=\frac{1}{2}\int_{\mathbb{T}\times \mathbb{T}}|u|^2dxdy.
		\end{equation}
		\begin{remark}
			There are other two important conserved quantities associated with the equation $(\ref{nls})$ (see \cite[Section 2]{sulem}) posed in $\mathbb{R}^n$. The first one is the linear momentum given by
			$$\overrightarrow{P}(u)={\rm Im}\int_{\mathbb{R}^n}\bar{u}\nabla u\ d\overrightarrow{x}.$$
			This conserved quantity is useful for studying the orbital stability of traveling waves of equation $(\ref{nls})$, especially when the equation is considered in $\mathbb{R}^n$. In the periodic case, the linear momentum can still be considered in the study of traveling wave stability, but certain restrictions must be imposed (see \cite[page 3]{NP1}). Another important conserved quantity is the angular momentum, expressed by
			$$\overrightarrow{A}(u)={\rm Im}\int_{\mathbb{R}^n}\bar{u}(\overrightarrow{x}\wedge\nabla u)\ d\overrightarrow{x},$$
		where $\overrightarrow{x}=(x_1,x_2,\cdots,x_n)$ and $\overrightarrow{x}\wedge\nabla u$ denotes the vector cross product in $\mathbb{R}^n$. In our context, the conservation quantity $\overrightarrow{A}$ does not play an effective role in the stability analysis of periodic standing waves. In addition, the vector cross product $\overrightarrow{x}\wedge\nabla u$ is not well defined in the periodic context due the lack of spatial decay to zero at infinite.
		\end{remark}
		\indent A suitable and new way to construct periodic real valued solutions for the elliptic equation \eqref{ode} can be determined by using the local and global bifurcation theory in \cite{buffoni-toland} (see also \cite{BD}). First, we prove the existence of small-amplitude periodic solutions for $c>\frac{2}{p}$ and close to the bifurcation point $\frac{2}{p}$. After that, we give sufficient conditions to extend parameter $c$ to the whole interval $\left(\frac{2}{p},+\infty\right)$ by constructing a symmetric periodic continuous function $c \in \left( \frac{2}{p}, +\infty \right) \mapsto \varphi_{c} \in H_{\rm per,s}^{2}$,
		where for $r\geq0$, we define $H_{\rm per,s}^{r}=\{f\in H_{\rm per}^2;\ f(-x,y)=f(x,y)\ \mbox{and}\ f(x,-y)=f(x,y)\ \mbox{a.e. in}\ \mathbb{T}\times\mathbb{T}\}$. Important to mention that small-amplitude periodic waves have been used to show the existence and spectral/orbital stability of periodic solutions associated with several evolutionary equations. As examples, we can cite \cite{natali1}, \cite{natali2}, \cite{natali3}, and \cite{natali4}.

Another way to determine the existence of periodic waves $\varphi$ that solves $(\ref{ode})$ is to find a minimizer of the following complex problem,
	\begin{equation}
		\label{infB2}
		q = \inf_{u\in Y_\lambda} B_c(u), \quad
		B_c(u) = \frac{1}{2}\int_{\mathbb{T}\times\mathbb{T}}|\nabla u|^2+c|u|^2dxdy,
	\end{equation}
	in the constrained set
	\begin{equation}
		\label{Y-constraint2}
		Y_{\lambda} = \left\{u\in H_{\rm per,s}^1;\ \int_{\mathbb{T}\times\mathbb{T}} |u|^{p+2} dxdy = \lambda \right\}.
	\end{equation}
	\indent Once we determine a complex function $\Psi$ that solves the complex periodic boundary problem 
	
	\begin{equation}\label{complexode}
		-\Delta\psi+c\Psi-|\Psi|^p\Psi=0,
		\end{equation}
	we need to establish the existence of $\theta\in\mathbb{R}$ and a smooth positive and periodic real valued function $\varphi$ such that $\Psi(x,y)=e^{i\theta}\varphi(x,y)$ for all $(x,y)\in\mathbb{T}\times \mathbb{T}$, where $\varphi$ solves equation $(\ref{ode})$. The arguments to do so are an adaptation for the periodic case of the approach in \cite[Section 8]{Cazenave}. In addition, we also prove that $c > \frac{2}{p}$, and since $c \rightarrow \frac{2}{p}^{+}$, we find that the small-amplitude periodic solutions can be considered minimizers of the problem $(\ref{infB2})$.\\
		\indent Now, we present how to obtain the spectral stability of periodic waves with respect to perturbations of the same period. Indeed, in order to improve the readers' understanding, we split equation $(\ref{nls})$ into its real and imaginary parts. This means that we can consider $U = (P,Q)$ and the system $(\ref{nls})$ becomes,
		
			\begin{equation}\label{equisys}
			\left\{\begin{array}{cccc}
				P_t+\Delta Q+Q(P^2+Q^2)^{\frac{p}{2}}=0,\\
				-Q_t+\Delta P+P(P^2+Q^2)^{\frac{p}{2}}=0.
			\end{array}\right.
		\end{equation}
		Quantities \eqref{Eu} and \eqref{Fu}, in this new scenario, are given by
		\begin{equation}\label{hamiltocons1}
			E(U)=E(P,Q)=\frac{1}{2}\int_{\mathbb{T}\times\mathbb{T}} \bigg(P_x^2+P_y^2+Q_x^2+Q_y^2-\frac{2}{p+2}(P^2+Q^2)^{\frac{p+2}{2}}\bigg)\,dxdy,
		\end{equation}
		and
		\begin{equation}\label{mass1}
			F(U)=F(P,Q)=\frac{1}{2}\int_{\mathbb{T}\times\mathbb{T}} (P^2+Q^2)\,dxdy.
		\end{equation}
		As a consequence, \eqref{equisys} or, equivalently, \eqref{nls} can be written as
		\begin{equation}\label{hamiltonian}
			\frac{d}{dt}U(t)=J E'(U(t)),
		\end{equation}
		where $E'$ represents the Fr\'echet derivative of $E$ with respect to $U$, and
		\begin{equation}\label{J}
			J=\left(\begin{array}{cccc}
				0 & -1\\
				1 & 0
			\end{array}\right).
		\end{equation}

\indent To set our problem, let us consider the stationary solution
$\Psi = (\varphi,0)$ and the general perturbation
\begin{equation}\label{U-1}
	U(x,t) = e^{i c t} (\Psi(x,y) + W(x,y,t))
\end{equation} where $W =(W_1,W_2)\in \mathbb{R}^2$ and $\beta\in\mathbb{R}$. Substituting \eqref{U-1} into \eqref{hamiltonian} and neglecting all the nonlinear terms, we get the following linearized equation:
\begin{equation}\label{spectral}
	\frac{d}{dt} W(x,y,t) = J \mathcal{L} W(x,y,t),
\end{equation}
where $J$ is given by $(\ref{J})$. The linear operator $\mathcal{L}$ in $(\ref{spectral})$ is given by
\begin{equation}\label{matrixop}
 \mathcal{L}= \begin{pmatrix}
\mathcal{L}_1 & 0 \\
0 & \mathcal{L}_2
\end{pmatrix},
\end{equation}
where
\begin{equation}\label{L1L2}
\mathcal{L}_1=-\Delta+c-(p+1)\varphi^p
\qquad \text{and} \qquad
\mathcal{L}_2=-\Delta+c-\varphi^p.
\end{equation}
Both operators $\mathcal{L}_1$ and $\mathcal{L}_2$ are self-adjoint  and they are deﬁned in $L^2_{\rm per,s}$ with dense domain $H^{2}_{\rm per,s}$. \\
\indent To define the concept of spectral stability within our context, we need to substitute the growing mode solution of the form $W(x,y,t)=e^{\lambda t}w(x,y)$ into the linear equation \eqref{spectral} to obtain the following spectral problem
\begin{equation}\label{JL}
	J \mathcal{L} w = \lambda w.
\end{equation} 
\indent The definition of spectral stability in our context reads as follows.
\begin{definition}\label{def-spectralstability}
	The stationary wave $\Psi$ is said to be spectrally stable by periodic perturbations that have the same period as the standing wave solution if $\sigma(J \mathcal{L}) \subset i \mathbb{R}$. Otherwise, if there exists at least one eigenvalue $\lambda$ associated with the operator $J \mathcal{L}$ that has a positive real part, $\Psi$ is said to be spectrally unstable.
\end{definition}

\indent Let ${\rm n}(\mathcal{A})$ and ${\rm z}(\mathcal{A})$  be the number of negative eigenvalues and the dimension of the kernel of a certain linear operator $\mathcal{A}$. In our paper, a prior understanding of these non-negative numbers is essential for obtaining the spectral stability result. To do so, we need to use the methods developed by Kapitula, Kevrekidis and Sandstede in \cite{KapitulaKevrekidisSandstedeII} and \cite{KapitulaKevrekidisSandstedeI}.\\
\indent Indeed, if ${\rm z}(\mathcal{L})=n$, consider $\{\Theta_l\}_{l=1}^n \subset {\rm Ker}(\mathcal{L})$ a linearly independent set and let $V$ be the $n\times n$ matrix whose entries are given by 
\begin{equation}\label{V}
	V_{jl} = ( \mathcal{L}^{-1} J \Theta_j, J \Theta_l )_{{L}^2_{\rm per}},
\end{equation}
where $1\leq j,l\leq n$. If $V$ is invertible, the formula
\begin{equation}\label{krein}
	k_r + k_c + k_{-} = {\rm n}(\mathcal{L}) - {\rm n}(V),
\end{equation}
is given in \cite{KapitulaKevrekidisSandstedeII} and the left-hand side of $(\ref{krein})$ is called the hamiltonian Krein index\footnote{If $V$ is not invertible, the formula in $(\ref{krein})$ reduces to $k_r + k_c + k_{-} = {\rm n}(\mathcal{L}) - {\rm n}(V)- {\rm z}(V)$ and the spectral stability can be determined by considering the dimension of the kernel of $V$}. Regarding operator $J\mathcal{L}$ in $(\ref{JL})$, let $k_r$ be the number of real-valued and positive eigenvalues (counting multiplicities). The number $k_c$ denotes the number of complex-valued eigenvalues with a positive real part and $k_-$ is the number of pairs of purely imaginary eigenvalues with negative Krein signature of $J\mathcal{L}$. Since $k_c$ and $k_-$ are always even numbers, we obtain that if the right-hand side in \eqref{krein} is an odd number, then $k_r \geq 1$ and we have automatically the spectral instability. Moreover, if the difference ${\rm n}(\mathcal{L}) - {\rm n}(V)$ is zero, then $k_c = k_- = k_r = 0$ which implies the spectral stability.\\
\indent In our case, since the small-amplitude periodic waves solve the minimization problem $(\ref{infB2})$, we deduce that ${\rm n}(\mathcal{L}_1)=1$, ${\rm n}(\mathcal{L}_2)=0$, and ${\rm z}(\mathcal{L}_2)=1$. In addition, since $\mathcal{L}$ is given by $(\ref{matrixop})$ and $J$ is given by $(\ref{J})$, we obtain, in the case where $\Ker(\mathcal{L}_1)=[v_1]$ that $V_{12}=V_{21}=0$, $V_{11}= ( \mathcal{L}_2^{-1}v_1, v_1)_{{L}^2_{\rm per}}$, and $V_{22}=( \mathcal{L}_1^{-1}\varphi, \varphi)_{{L}^2_{\rm per}}$. The fact small-amplitude periodic solution $\varphi$ depends smoothly on the parameter $c$ enables to conclude that $\varphi\in {\rm Range}(\mathcal{L}_1)$. This means that there exists $h=-\frac{d\varphi}{dc}\in H_{\rm per,s}^2$, such that $\mathcal{L}_1h=\varphi$. Consequently, we have $(v_1,\varphi)_{L_{\rm per}^2}=(v_1,\mathcal{L}_1\chi)_{L_{\rm per}^2}=(\mathcal{L}_1v_1,\chi)_{L_{\rm per}^2}=0$. This last fact implies $V_{11}>0$, since ${\rm z}(\mathcal{L}_2)=1$ and ${\rm n}(\mathcal{L}_2)=0$. Therefore, since for small-amplitude periodic waves, we can prove the inequality $V_{22} < 0$, we obtain that $\det(V) = V_{11} V_{22} < 0$. This, in turn, indicates that $V$ has exactly one negative eigenvalue, that is, $n(V) = 1$. Since $n(\mathcal{L}) = 1$, we deduce spectral stability. Now, if $\Ker(\mathcal{L}_1)$ is trivial, we obtain that $V=( \mathcal{L}_1^{-1}\varphi, \varphi)_{{L}^2_{\rm per}}$. Since in this case, $( \mathcal{L}_1^{-1}\varphi, \varphi)_{{L}^2_{\rm per}}<0$ for small-amplitude periodic waves, we also conclude the spectral stability.



 Just to fix the basic notation used throughout the paper. Given $s\in\R$, by $H_{\rm per}^s=H^s(\mathbb{T}\times\mathbb{T})$ we denote the usual Sobolev space of real or complex functions. In particular $H_{\rm per}^0\cong L_{\rm per}^2$. The scalar product in $H_{\rm per}^s$ will be denoted by $(\cdot,\cdot)_{H_{\rm per}^s}$. The notation $a \sim b$ means that $a$ is sufficiently close to $b$; that is, for all $\varepsilon > 0$, we have $|a - b| < \varepsilon$. Similarly, the notation $a \sim b^{+}$ means that $a$ is sufficiently close to $b$ from the right, which means that for all $\varepsilon > 0$, we have $a - b < \varepsilon$ and $a > b$. \\
 \indent Next, we present our main result, which summarizes the discussion contained in the second paragraph of this page.

\begin{theorem}\label{teo-1}
	 Let $p\geq1$ be a fixed integer. Consider the small-amplitude periodic wave solution $\varphi\in H^1_{\rm per,s}$ of the equation \eqref{ode} which solves the minimization problem in $(\ref{infB2})$. The wave $\Psi = (\varphi, 0)$ is spectrally stable for all $c\sim\frac{2}{p}^{+}$ in the sense of Definition $\ref{def-spectralstability}$, in the space $L_{\rm per,s}^2$.\end{theorem}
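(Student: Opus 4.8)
The plan is to compute the hamiltonian Krein index through formula \eqref{krein} and to show that ${\rm n}(\mathcal{L}) - {\rm n}(V) = 0$, which by the discussion following \eqref{krein} forces $k_r = k_c = k_- = 0$ and hence the spectral stability. The spectral data coming from the minimization problem \eqref{infB2}, namely ${\rm n}(\mathcal{L}_1) = 1$, ${\rm n}(\mathcal{L}_2) = 0$ and ${\rm z}(\mathcal{L}_2) = 1$, give at once ${\rm n}(\mathcal{L}) = {\rm n}(\mathcal{L}_1) + {\rm n}(\mathcal{L}_2) = 1$, since $\mathcal{L}$ is the block-diagonal operator \eqref{matrixop}. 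The kernel of $\mathcal{L}_2$ is spanned by $\varphi$ (as $\mathcal{L}_2\varphi = 0$ by \eqref{ode}), so the vector $(0,\varphi)$ always lies in $\Ker(\mathcal{L})$; whether $\Ker(\mathcal{L})$ is one- or two-dimensional depends on $\Ker(\mathcal{L}_1)$, and I would treat the cases $\Ker(\mathcal{L}_1)=[v_1]$ and $\Ker(\mathcal{L}_1)=\{0\}$ separately, exactly as anticipated in the introduction.

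Next I would compute the matrix $V$ of \eqref{V} explicitly. Using the block form of $\mathcal{L}$ and the off-diagonal $J$ of \eqref{J}, one has $J(v_1,0) = (0,v_1)$ and $J(0,\varphi) = (-\varphi,0)$, so the rows of $V$ decouple and $V$ is diagonal with $V_{12}=V_{21}=0$, $V_{11} = (\mathcal{L}_2^{-1}v_1, v_1)_{L^2_{\rm per}}$ and $V_{22} = (\mathcal{L}_1^{-1}\varphi, \varphi)_{L^2_{\rm per}}$. Before this is meaningful I must verify the solvability conditions: differentiating \eqref{ode} with respect to $c$ gives $\mathcal{L}_1\big(-\tfrac{d\varphi}{dc}\big) = \varphi$, so $\varphi\in\text{Range}(\mathcal{L}_1)$ and, by self-adjointness, $(\varphi,v_1)_{L^2_{\rm per}}=0$; the same orthogonality places $(0,v_1)$ and $(-\varphi,0)$ in $\text{Range}(\mathcal{L})$, so that $V$ is well defined.

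I would then fix the signs of the diagonal entries. The entry $V_{11}$ is the easy one: since ${\rm n}(\mathcal{L}_2)=0$ and ${\rm z}(\mathcal{L}_2)=1$ with $\Ker(\mathcal{L}_2)=[\varphi]$, the operator $\mathcal{L}_2$ is positive definite on $[\varphi]^{\perp}$, and $v_1\perp\varphi$ places $v_1$ in that subspace, whence $V_{11}=(\mathcal{L}_2^{-1}v_1,v_1)_{L^2_{\rm per}}>0$. The entry $V_{22}$ is the crux. Using $\mathcal{L}_1^{-1}\varphi = -\tfrac{d\varphi}{dc}$ one rewrites $V_{22} = -\big(\tfrac{d\varphi}{dc},\varphi\big)_{L^2_{\rm per}} = -\tfrac12\tfrac{d}{dc}\|\varphi\|_{L^2_{\rm per}}^2$, so the desired inequality $V_{22}<0$ is equivalent to the strict monotonicity $\tfrac{d}{dc}\|\varphi\|^2_{L^2_{\rm per}}>0$ of the mass along the wave branch.

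Establishing this monotonicity for $c\sim\tfrac{2}{p}^{+}$ is where the real work lies, and I would carry it out through the small-amplitude expansion used to construct $\varphi$. About the constant equilibrium $\varphi_0 = c^{1/p}$ of \eqref{ode} the linearization is $-\Delta - pc$, whose kernel in $H^2_{\rm per,s}$ first becomes nontrivial at $pc=2$, i.e. $c=\tfrac{2}{p}$, with eigenfunction $\cos x\cos y$; this is the bifurcation point. Thus near it $\varphi = c^{1/p} + a\cos x\cos y + O(a^2)$, with amplitude $a\to 0$ and $c = \tfrac{2}{p} + c_2 a^2 + O(a^4)$, and the existence of the branch for $c>\tfrac{2}{p}$ forces the supercritical sign $c_2>0$. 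Writing $\|\varphi\|^2_{L^2_{\rm per}} = (2\pi)^2 c^{2/p} + \kappa a^2 + O(a^3)$ and differentiating through $c=c(a)$ (using $\tfrac{d(a^2)}{dc} = 1/c_2 + O(a^2)$) gives, in the limit $a\to 0$, $\tfrac{d}{dc}\|\varphi\|^2_{L^2_{\rm per}} \to (2\pi)^2(2/p)^{2/p} + \kappa/c_2$; the first summand is manifestly positive, and the technical heart of the argument is to compute the coefficients $\kappa$ and $c_2$ and to confirm that the total stays strictly positive, yielding $V_{22}<0$. Combined with $V_{11}>0$ this gives $\det V = V_{11}V_{22}<0$, so $V$ is invertible with ${\rm n}(V)=1$; in the degenerate case $\Ker(\mathcal{L}_1)=\{0\}$ the matrix reduces to the scalar $V=V_{22}<0$, again with ${\rm n}(V)=1$. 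In every case ${\rm n}(\mathcal{L})-{\rm n}(V) = 1-1 = 0$, and \eqref{krein} closes the argument. The main obstacle is precisely the sign of $V_{22}$: controlling the $c$-derivative of the mass uniformly as $c\to\tfrac{2}{p}^{+}$, which hinges on pinning down the leading bifurcation coefficients and in particular the supercritical direction $c_2>0$.
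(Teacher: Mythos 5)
Your overall architecture coincides with the paper's: the Krein count \eqref{krein}, ${\rm n}(\mathcal{L})={\rm n}(\mathcal{L}_1)=1$ from the constrained minimization, the diagonal matrix $V$ with $V_{11}=(\mathcal{L}_2^{-1}v_1,v_1)_{L^2_{\rm per}}>0$ (via $\varphi=\mathcal{L}_1(-\tfrac{d\varphi}{dc})\in{\rm Range}(\mathcal{L}_1)$, hence $v_1\perp\varphi$, and positivity of $\mathcal{L}_2$ on $[\varphi]^{\perp}$), the identity $V_{22}=-\tfrac12\tfrac{d}{dc}\|\varphi\|_{L^2_{\rm per}}^2$, and the two cases according to whether $\Ker(\mathcal{L}_1)$ is trivial. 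This is exactly the paper's Lemma \ref{bouL2}, Lemma \ref{bouL3} and the proof of Theorem \ref{teo-1}.

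There is, however, a genuine gap at the step you yourself identify as the crux. You justify the expected sign of $\tfrac{d}{dc}\|\varphi\|^2_{L^2_{\rm per}}$ by asserting that ``the existence of the branch for $c>\tfrac2p$ forces the supercritical sign $c_2>0$.'' This is false as stated: the solvability condition at order $\mathcal{O}(a^3)$ gives $c_2=-\tfrac1{48}$ for $p=1$ (equation \eqref{varphi-stokes3}) and the negative value \eqref{c21} for general $p$, so the bifurcation is not supercritical in the sense you assume, and the heuristic supporting positivity of $\kappa/c_2$ collapses. Since along the branch $\|\varphi\|^2_{L^2_{\rm per}}=M_0+\kappa a^2+\mathcal{O}(a^3)$ and $c=\tfrac2p+c_2a^2+\mathcal{O}(a^4)$, the limit of $\tfrac{d}{dc}\|\varphi\|^2_{L^2_{\rm per}}$ as $a\to0$ is governed by the ratio $\kappa/c_2$, and with $c_2<0$ this ratio is positive only because $\kappa$ is \emph{also} negative (for $p=1$ one finds $\kappa=-\tfrac{4\pi^2}{3}$, giving the positive limit $64\pi^2$); this is precisely the computation you defer, and the sign cannot be obtained without it. The paper carries it out by substituting the Stokes expansions \eqref{stokes1313} and \eqref{stokes212} into \eqref{derc1} in Lemma \ref{bouL3}-ii). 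A secondary omission: you treat only $\dim\Ker(\mathcal{L}_1)\in\{0,1\}$ without ruling out a larger kernel; the paper does so by the continuity argument ${\rm z}(\mathcal{L}_1)\leq {\rm z}(-\Delta-pc_0)=1$ for $c\sim\left(\tfrac2p\right)^{+}$, inherited from the one-dimensional kernel at the bifurcation point, and this needs to be said for the $2\times2$ computation of $V$ to be exhaustive.
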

\begin{remark}\label{remexpl} For the reader’s better understanding, we provide all the necessary details to establish the existence of small-amplitude periodic solutions via bifurcation theory, along with their global continuation for all values of $c > \frac{2}{p}$, specifically in the case $p = 1$. For any integer $p\geq 1$, the arguments are similar (see Remark $\ref{expsol3}$). We believe that establishing the results in a particular case before extending them to the general setting is a useful approach. The proof of Theorem $\ref{teo-1}$ in Section 3 is presented for any integer $p \geq 1$.
\end{remark}
\indent Our paper is organized as follows: Section 2 is devoted to present the existence of periodic waves using local/global bifurcation and using variational techniques that can be connected with both local and global solutions. In Section 3, we present the spectral stability of small-amplitude periodic waves constructed in Section 2. Finally, in Section 4, we establish some important remarks concerning the construction of small-amplitude periodic waves in suitable spaces.\\

			\section{Existence of periodic waves and spectral analysis}
			 In order to obtain the existence of small-amplitude periodic waves for the equation $(\ref{ode})$, we need to define some facts regarding Fredholm operators:
			
			\begin{definition}\label{def1} Let $X$ be a Banach space. An unbounded operator $L:D(L)\subset X\rightarrow Y$ is a Fredholm operator if ${\rm{Range}}(L)$ is closed and ${\rm{z}}(L)$ and ${\rm{c}}(L)$ are both finite. Here, ${\rm c}(L)$ indicates the dimension of ${\rm Coker}(L)$. 
			\end{definition}
			
			\begin{remark}
				Just to make clear for the readers: ${\rm Coker}(L)$ denotes the quotient space given by ${\rm Coker}(L)=\bigslant{Y}{{\rm Range}(L)}$.		
			\end{remark}
			
			\begin{definition}\label{def2} The index of an unbounded Fredholm operator $L:D(L)\subset X\rightarrow Y$ is given by ${\rm ind}(L)={\rm{z}}(L)-{\rm{c}}(L)\in \mathbb{Z}$. A Fredholm operator is of index zero if ${\rm ind}(L)=0$.		
				\end{definition}
			\indent Next result is very useful for our purposes in order to obtain a relation between ${\rm{c}}(L)$ and ${\rm{z}}(L)$:

			\begin{lemma}\label{lema3} Let $H$ be a real Hilbert space and $K\subset H$ a closed subspace. It follows that, $$\bigslant{H}{K}\cong K^{\bot},$$
			where the notation $A \cong B$ indicates that $A$ and $B$ are isomorphic. Therefore, if both $A$ and $B$ are finite dimensional, they have the same dimension.

			\end{lemma}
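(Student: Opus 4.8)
The plan is to exploit the Hilbert space structure directly through the orthogonal projection theorem. Since $K$ is a \emph{closed} subspace of the Hilbert space $H$, the projection theorem yields the orthogonal decomposition $H = K \oplus K^{\bot}$; that is, every $h \in H$ admits a unique representation $h = k + v$ with $k \in K$ and $v \in K^{\bot}$. This decomposition is the only nontrivial ingredient, and everything else reduces to verifying that the natural candidate map is a linear isomorphism.

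Concretely, I would define $\Phi : K^{\bot} \to \bigslant{H}{K}$ by $\Phi(v) = v + K$, the coset of $v$ modulo $K$. Linearity is immediate from the vector space operations on the quotient. For injectivity, suppose $\Phi(v) = K$, i.e. $v + K = K$; then $v \in K$, but since $v \in K^{\bot}$ by hypothesis we get $v \in K \cap K^{\bot} = \{0\}$, so $v = 0$. For surjectivity, take an arbitrary coset $h + K$ and use the decomposition $h = k + v$ with $k \in K$ and $v \in K^{\bot}$; then $h - v = k \in K$, whence $h + K = v + K = \Phi(v)$. Thus $\Phi$ is a linear bijection and $\bigslant{H}{K} \cong K^{\bot}$, as claimed. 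The concluding assertion is then automatic: a linear isomorphism between finite-dimensional spaces carries a basis to a basis, so the two spaces share the same dimension.

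The main (and really the only) obstacle is the appeal to the projection theorem: closedness of $K$ is precisely what guarantees $H = K \oplus K^{\bot}$ and, in particular, that every $h$ decomposes with its $K^{\bot}$-component, which is what makes $\Phi$ surjective. Without this hypothesis the orthogonal complement need not complement $K$, and the argument collapses. Since in our application $L$ is a Fredholm operator with closed range, the relevant subspaces (range and kernel) are closed, so the lemma applies directly to relate $\mathrm{c}(L)$ and $\mathrm{z}(L)$.
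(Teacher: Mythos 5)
Your proof is correct and follows essentially the same route as the paper: both arguments rest on the orthogonal projection theorem, and your map $\Phi(v)=v+K$ is precisely the inverse of the paper's map $T(u+K)=u-P_Ku$. The only cosmetic difference is that the paper additionally verifies boundedness of $T$ via the Pythagorean identity, which is not needed for the dimension-counting conclusion the lemma is used for.
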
				
			\begin{proof} Let us define $T:\bigslant{H}{K}\rightarrow K^{\bot}$ given by $T(u+K)=u-P_Ku$, where $P_K$ is the orthogonal projection from $H$ onto the closed subspace $K$. It is well known that for any $u\in H$, we obtain $P_Ku\in K$ and $u-P_Ku\in K^{\bot}$, that is, $T$ is well-defined. In addition, since $||T(u+K)||_H=||u-P_Ku||$, we obtain by Pythagorean theorem  $||u||_{H}^2=||P_Ku||_H^2+||u-P_Ku||_H^2=||P_Ku||_H^2+||T(u+K)||_H^2.$
				The equality implies $||T(u+K)||_H^2=||u||_{H}^2-||P_Ku||_H^2\leq ||u||_{H}^2$, and thus, $T$ is a bounded operator. $T$ is an one-to-one operator since for $T(u+K)=0$, we have $u=P_Ku$ and this fact automatically implies $u\in K$, that is, $u+K=0$. To see that $T$ is onto, we consider $v\in K^{\bot}$. By the definition of orthogonal projection from $H$ onto the closed subspace $K$, there exists $u\in H$ such that $v=u-P_Ku$, and $T$ is onto as desired.
				
			\end{proof}
			
			\begin{remark}\label{obs2} We can provide a new way to view Definitions $\ref{def1}$ and $\ref{def2}$ for a Hilbert space $H$ and an unbounded self-adjoint linear operator $L:D(L)\subset H\rightarrow H$ with closed range. In fact, since $L$ is self-adjoint with closed range, we have by Lemma $\ref{lema3}$ that $\bigslant{H}{{\rm Range}(L)}= \bigslant{H}{\Ker(L)^{\bot}}\cong\Ker(L)^{{\bot}{\bot}}=\Ker(L)$. Therefore, we can conclude in this specific case, if ${\rm z}(L)$ is finite, that $L$ is always a Fredholm operator of index zero.	
				
			\end{remark}
			\indent Now, we have the first local bifurcation theorem to obtain the existence of small-amplitude periodic waves.
			
			\begin{proposition}\label{TCR} Suppose that $X$ and $Y$ are Banach spaces, that $F: X\times \mathbb{R}\rightarrow Y$ is of class $C^k$, $k\geq2$, and that $F(x_0,\lambda_0)=0\in Y$ for some $(x_0,\lambda_0) \in X\times \mathbb{R}$. Suppose also that
				\begin{enumerate}
					\item $\partial_gF(g,\lambda)$ is a Fredholm operator of index zero when $F(g,\lambda)=0$ for all $(g,\lambda)\in U$. Here $U\subset  X\times \mathbb{R}$ denotes a subset.
					\item For some $(x_0,\lambda_0)\in X\times \mathbb{R}$, $\ker(L_{x_0,\lambda_0})$ is one dimensional, where $L_{x_0,\lambda_0}=\partial_gF(x_0,\lambda_0)$. This means that
					$\Ker(L_{x_0,\lambda_0})=\{h\in X;\ h=\alpha h_0\ \mbox{for some}\ \alpha\in\mathbb{R}\},$ $h_0\in X\backslash\{0\}$.
					\item The transversality condition holds: $\partial_{g,\lambda}^2F[(x_0,\lambda_0)](h_0,1)\notin {\rm{Range}}(L_{x_0,\lambda_0})$.
				\end{enumerate} 
				Then, there exists $\epsilon>0$ and a branch of solutions 
				$\{(\chi(s),\Lambda(s));\ s\in\mathbb{R},\ |s|<\epsilon\}\subset  X\times \mathbb{R},$
				such that $\Lambda(0)=\lambda_0$ and $\chi(0)=x_0$. In addition, we have
				\begin{itemize}
					\item $F(\chi(s),\Lambda(s))=0$ for all $s$ with $|s|<\epsilon$.
					\item functions $s\mapsto\Lambda(s)$ and $s\mapsto \chi(s)$ are of class $C^{k-1}$, and $\chi$ is of class $C^{k-2}$, on $(-\epsilon,\epsilon)$.
					\item there exists an open set $U_0\subset X\times\mathbb{R}$ such that $(x_0,\lambda_0)\in U_0$ and 
					$\{(g,\lambda)\in U_0;\ F(g,\lambda)=0,\ g\neq0\}=\{(\chi(s),\Lambda(s));\ 0<|s|<\epsilon\}.$
					\item If $F$ is analytic, $\chi$ and $\Lambda$ are analytic functions on $(-\epsilon,\epsilon)$.					
					\end{itemize}
			\end{proposition}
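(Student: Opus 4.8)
The plan is to use the Lyapunov--Schmidt reduction, which turns the infinite-dimensional equation $F=0$ near $(x_0,\lambda_0)$ into a single scalar equation solvable by the implicit function theorem. Since the conclusion separates the nontrivial solutions ($g\neq0$) from a branch of trivial ones, I set up the reduction in the standard configuration $x_0=0$ with $F(0,\lambda)=0$ for all $\lambda$ near $\lambda_0$, the setting in which the transversality condition $(3)$ is decisive. Write $L=\partial_gF(0,\lambda_0)$. By hypotheses $(1)$ and $(2)$, $L$ is Fredholm of index zero with $\Ker(L)=[h_0]$ one-dimensional, so ${\rm Range}(L)$ is closed of codimension one. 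I would fix closed complements $X=\Ker(L)\oplus X_1$ and $Y={\rm Range}(L)\oplus Y_1$ with $\dim Y_1=1$, and let $P\colon Y\to{\rm Range}(L)$, $Q=I-P\colon Y\to Y_1$ be the associated continuous projections. Writing the unknown as $g=sh_0+w$ with $s\in\mathbb{R}$, $w\in X_1$, the equation $F(g,\lambda)=0$ splits equivalently into the range equation $PF(sh_0+w,\lambda)=0$ and the complementary equation $QF(sh_0+w,\lambda)=0$.

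First I would solve the range equation for $w$. Define $G\colon X_1\times\mathbb{R}\times\mathbb{R}\to{\rm Range}(L)$ by $G(w,s,\lambda)=PF(sh_0+w,\lambda)$; it is $C^k$, satisfies $G(0,0,\lambda_0)=0$, and $\partial_wG(0,0,\lambda_0)=P\,L|_{X_1}=L|_{X_1}$. The map $L|_{X_1}\colon X_1\to{\rm Range}(L)$ is a continuous bijection---injective because $X_1\cap\Ker(L)=\{0\}$ and surjective because $L(X)={\rm Range}(L)$ with $L$ vanishing on $\Ker(L)$---hence an isomorphism. The implicit function theorem then provides a $C^k$ map $(s,\lambda)\mapsto w(s,\lambda)$ near $(0,\lambda_0)$, locally unique in $X_1$, with $w(0,\lambda_0)=0$. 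The trivial branch forces $w(0,\lambda)\equiv0$; and differentiating the range equation in $s$ at $(0,\lambda_0)$ gives $L\,\partial_sw(0,\lambda_0)=-Lh_0=0$, whence $\partial_sw(0,\lambda_0)=0$ because $\partial_sw(0,\lambda_0)\in X_1\cap\Ker(L)=\{0\}$.

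Substituting into the complementary equation yields the scalar bifurcation function
$$\phi(s,\lambda)=QF\big(sh_0+w(s,\lambda),\lambda\big)\in Y_1\cong\mathbb{R},$$
whose zeros correspond exactly to the solutions of $F=0$ near the origin. Since $QL=0$, one finds $\partial_s\phi(0,\lambda_0)=QL\big(h_0+\partial_sw(0,\lambda_0)\big)=0$, and $\phi(0,\lambda)\equiv0$ by the trivial branch; hence I may factor $\phi(s,\lambda)=s\,\psi(s,\lambda)$ with $\psi(s,\lambda)=\int_0^1\partial_s\phi(ts,\lambda)\,dt$ of class $C^{k-1}$, so the nontrivial solutions are the zeros of $\psi$. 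The decisive computation is that, using $\partial_\lambda w(0,\lambda_0)=0$, $\partial_sw(0,\lambda_0)=0$, and $QL=0$ to discard every term carrying a derivative of $w$, the mixed derivative collapses to
$$\partial_\lambda\psi(0,\lambda_0)=\partial_\lambda\partial_s\phi(0,\lambda_0)=Q\,\partial_{g,\lambda}^2F[(0,\lambda_0)](h_0,1),$$
which is nonzero precisely because hypothesis $(3)$ asserts $\partial_{g,\lambda}^2F[(0,\lambda_0)](h_0,1)\notin{\rm Range}(L)=\Ker(Q)$. Applying the implicit function theorem to $\psi=0$ at $(0,\lambda_0)$ (where $\psi(0,\lambda_0)=0$ and $\partial_\lambda\psi(0,\lambda_0)\neq0$) then yields a $C^{k-1}$ function $s\mapsto\Lambda(s)$ with $\Lambda(0)=\lambda_0$, and $\chi(s)=sh_0+w(s,\Lambda(s))$ is the required branch, with $\chi(0)=x_0$ and $\chi'(0)=h_0$.

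Finally, the regularity $\chi,\Lambda\in C^{k-1}$ is exactly the single derivative lost in the factorization $\phi=s\psi$; the stated local characterization of $\{F=0,\ g\neq0\}$ is the uniqueness in the two implicit function theorem applications pulled back through the reduction; and if $F$ is analytic, the analytic implicit function theorem upgrades both $\chi$ and $\Lambda$ to analytic functions. I expect the main obstacle to be this third step---verifying that $\partial_\lambda\psi(0,\lambda_0)$ reduces cleanly to the transversal quantity in $(3)$---since it requires carefully tracking the implicitly defined corrector $w$ and confirming that $\partial_sw(0,\lambda_0)=0$, $\partial_\lambda w(0,\lambda_0)=0$, and $QL=0$ jointly annihilate all spurious contributions, leaving only the transversal direction.
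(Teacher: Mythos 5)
The paper does not prove Proposition \ref{TCR} at all: its ``proof'' is a pointer to Theorem 8.3.1 of Buffoni--Toland. Your proposal supplies the actual argument, and it is the correct, standard Lyapunov--Schmidt/Crandall--Rabinowitz proof (essentially the one in the cited reference): split $X=\Ker(L)\oplus X_1$, $Y=\mathrm{Range}(L)\oplus Y_1$, solve the range equation by the implicit function theorem using that $L|_{X_1}:X_1\to\mathrm{Range}(L)$ is an isomorphism (open mapping theorem, since both subspaces are closed), factor the one-dimensional bifurcation function as $\phi=s\psi$, and use transversality to get $\partial_\lambda\psi(0,\lambda_0)=Q\,\partial^2_{g,\lambda}F[(0,\lambda_0)](h_0,1)\neq0$. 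The key computations ($\partial_s w(0,\lambda_0)=0$ via $X_1\cap\Ker(L)=\{0\}$, $QL=0$ killing the spurious terms) are all correctly identified, and the loss of one derivative in passing from $\phi$ to $\psi$ matches the stated regularity. One point deserves emphasis: you explicitly add the hypothesis $F(x_0,\lambda)=0$ for all $\lambda$ near $\lambda_0$ (a trivial branch), which the statement as written omits but which is genuinely necessary --- without it the conclusion can fail (e.g.\ $X=Y=\mathbb{R}$, $F(x,\lambda)=\lambda x+x^2+\lambda^2$ satisfies all three listed hypotheses at the origin yet has no nontrivial solutions nearby), and $w(0,\lambda)\equiv0$ and $\phi(0,\lambda)\equiv0$, on which the factorization rests, both depend on it. This hypothesis is present in Buffoni--Toland and is satisfied in the paper's application after translating the constant solution $g=c_0^{1/p}$ to the origin, so your reading is the right one; it would be worth stating it explicitly rather than leaving it implicit in the choice of ``standard configuration.''
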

			\begin{proof} See \cite[Theorem 8.3.1]{buffoni-toland} and also \cite[Page 114]{buffoni-toland}.
				
			\end{proof}

			To prove the existence of small-amplitude periodic waves, we need to establish some basic facts. First, we consider the symmetric space:			
			$$L_{\rm per,s}^2=\{f\in L_{\rm per}^2;\ f(-x,y)=f(x,y)\ \mbox{and}\ f(x,-y)=f(x,y)\ \mbox{a.e. in}\ \mathbb{T}\times\mathbb{T}\}.$$
			
			The subspace $L_{\rm per,s}^2\subset L_{\rm per}^2$ is endowed with the inner product and norm of $L_{\rm per}^2$ and it is closed subspace contained in $L_{\rm per}^2$. In addition, if 			
			$$L_{\rm per,e}^2=\{f\in L_{\rm per}^2;\ f(-x,-y)=f(x,y)\ \mbox{a.e. in}\ \mathbb{T}\times\mathbb{T}\},$$
			indicates the subspace of $L_{\rm per}^2$ constituted by even periodic functions, we obtain that $L_{\rm per,s}^2\subset L_{\rm per,e}^2$. Just to make clear that the last inclusion  is strict, we consider $f(x,y)=\sin(x)\sin(y)$. We see that $f\in L_{\rm per,e}^2$, but $f$ is not an element of $L_{\rm per,s}^2$.
			
			\begin{lemma}\label{lema2} For $r\in\mathbb{R}$, consider $L_{r}=-\Delta +r$ as the linear operator defined in $L_{\rm per, e}^2$ with domain $H_{\rm per, e}^2$. We have that $\Ker(L_r)={\rm span}\{\cos(kx)\cos(jy),\sin(kx)\sin(jy)\}$ for all $k,j\in \mathbb{Z}$ satisfying $r^2=-k^2-j^2$.				
				\end{lemma}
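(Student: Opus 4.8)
The plan is to diagonalize $L_r$ against a complete orthogonal trigonometric basis of $L_{\rm per}^2(\mathbb{T}\times\mathbb{T})$ and then cut down to the even subspace by a symmetry argument. The relation between $r$ and the integers $(k,j)$ that produces a nontrivial kernel is the eigenvalue condition $k^2+j^2=-r$; that is, a nonzero kernel exists precisely when $-r$ is a nonnegative integer expressible as a sum of two squares.

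First I would recall that the four families
\[
\cos(kx)\cos(jy),\quad \cos(kx)\sin(jy),\quad \sin(kx)\cos(jy),\quad \sin(kx)\sin(jy),\qquad k,j\geq 0,
\]
(discarding the identically vanishing members and with the standard normalization) form a complete orthogonal basis of $L_{\rm per}^2$, obtained by tensoring the one-dimensional basis $\{1,\cos(kx),\sin(kx)\}_{k\geq 1}$ with itself. Each basis element $e_{k,j}$ is an eigenfunction of $-\Delta$ with eigenvalue $k^2+j^2$, so $L_r e_{k,j}=(k^2+j^2+r)e_{k,j}$. Expanding an arbitrary $f\in H_{\rm per}^2$ in this basis, the equation $L_r f=0$ is equivalent to $(k^2+j^2+r)\widehat{f}_{k,j}=0$ for every pair, which forces $\widehat{f}_{k,j}=0$ unless $k^2+j^2=-r$. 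Hence the kernel of $L_r$ on the full space is exactly the finite-dimensional $-\Delta$-eigenspace associated with the eigenvalue $-r$, and it is nontrivial precisely when $-r\in\{k^2+j^2:(k,j)\in\mathbb{Z}^2\}$.

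Next I would impose the evenness constraint. Let $\tau$ denote the involution $(\tau f)(x,y)=f(-x,-y)$. Then $\tau$ is unitary on $L_{\rm per}^2$ and commutes with $-\Delta$, hence with $L_r$, so $L_{\rm per,e}^2=\Ker(\tau-I)$ is $L_r$-invariant. A direct check shows that $\tau$ acts diagonally on the basis: it fixes $\cos(kx)\cos(jy)$ and $\sin(kx)\sin(jy)$, while it negates $\cos(kx)\sin(jy)$ and $\sin(kx)\cos(jy)$. Consequently $f\in L_{\rm per,e}^2$ if and only if the coefficients of $f$ against the two odd families vanish, so $L_{\rm per,e}^2$ is the closed span of $\{\cos(kx)\cos(jy),\ \sin(kx)\sin(jy)\}$. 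Intersecting with the eigenspace found above yields
\[
\Ker(L_r)={\rm span}\{\cos(kx)\cos(jy),\ \sin(kx)\sin(jy)\ :\ k^2+j^2=-r\},
\]
which is the assertion; when $k=0$ or $j=0$ the $\sin\!\cdot\!\sin$ term drops out and only the $\cos\!\cdot\!\cos$ term survives, consistently with the statement, and allowing $k,j$ to range over all of $\mathbb{Z}$ merely reproduces the same span since $\cos$ is even and $\sin$ is odd.

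The calculation is essentially routine once the basis is fixed; the only genuinely delicate point is the passage to the even subspace. The hard part is therefore to justify cleanly that $f$ is even if and only if its expansion omits the two odd families, and that the associated projection commutes with $L_r$, so that the kernel in $L_{\rm per,e}^2$ is obtained simply by intersecting the full kernel with the even span. This is exactly what the unitary-involution argument provides, after which no further estimates are required.
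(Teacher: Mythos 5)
Your proof is correct and follows essentially the same route as the paper: expand in a trigonometric basis, read off the eigenvalue condition $k^2+j^2=-r$ (note the lemma's ``$r^2=-k^2-j^2$'' is a typo for this), and then restrict to the even sector, which the paper does by writing the even kernel elements as $\cos(kx\pm jy)$ rather than via your involution $\tau$. Your version is if anything slightly cleaner, since describing the kernel as the full $-\Delta$-eigenspace intersected with the even subspace transparently covers the case where several lattice pairs satisfy $k^2+j^2=-r$.
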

				
			\begin{proof}
				Consider $u=u(x,y)$, a smooth periodic function defined in $\mathbb{T}\times\mathbb{T}$. Applying the Fourier transform to the equation $L_ru=0$, we obtain 
				\begin{equation}\label{fou1}
					((k^2+j^2)+r)\widehat{u}(k,j)=0.					
					\end{equation}
			For a fixed value of $r\in\mathbb{R}$, the existence of non-zero solutions is guaranteed by $(\ref{fou1})$ if $k,j\in\mathbb{Z}$ satisfy $r=-k^2-j^2$. This implies that $u_1(x,y)=\cos(kx+jy)$ and $u_2(x,y)=\cos(kx-jy)$ are the only even periodic functions defined over $\mathbb{T}\times\mathbb{T}$ which belong to the kernel of $L_r$. Using that $\cos(kx\pm jy)=\cos(kx)\cos(jy)\mp\sin(kx)\sin(jy)$ to simplify the notation, we obtain  the desired result.
				
			\end{proof}
			
			\subsection{Existence of periodic solutions via local and global bifurcation for the case $p=1$}
			
			\begin{proposition}\label{propstokes1}
				There exists $a_0 > 0$ such that for all $a \in (0,a_0)$ there is an even local periodic solution $\varphi$ for the problem \eqref{ode}. Small-amplitude periodic waves can be expressed by the following expansion
				\begin{equation}\label{varphi-stokes31}\begin{array}{lllll}
					\varphi(x,y) &=&\displaystyle 2+a\cos(x)\cos(y)\\\\
					&+&\displaystyle\frac{1}{8}a^2\left(-\frac{7}{6}+\left(\cos(2x)+\cos(2y)\right)+\frac{1}{3}\cos(2x)\cos(2y)\right)\\\\
					&+&\displaystyle \frac{1}{384}a^3\left(7\left(\cos(3x)\cos(y)+\cos(3y)\cos(x)\right)+\frac{1}{2}\cos(3x)\cos(3y)\right)\\\\
					&+&\mathcal{O}(a^4).
				\end{array}\end{equation}
				The frequency $c$ is expressed as	
				
				\begin{equation}\label{varphi-stokes3}
					c = 2-\frac{1}{48}a^2+\mathcal{O}(a^4).
				\end{equation}
			\end{proposition}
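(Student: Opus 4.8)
The plan is to recast the elliptic problem \eqref{ode} with $p=1$, namely $-\Delta\varphi+c\varphi-\varphi^2=0$, as a bifurcation problem handled by Proposition \ref{TCR}, and then to extract \eqref{varphi-stokes31}--\eqref{varphi-stokes3} by a Lyapunov--Schmidt power-series expansion in the amplitude. Since the nontrivial constant equilibrium is $\varphi\equiv c$, I would first move it to the origin by setting $\varphi=c+v$; a short computation turns the equation into $-\Delta v-cv-v^2=0$, which carries the trivial branch $v\equiv0$ for every $c$. Accordingly I set $F(v,c)=-\Delta v-cv-v^2$, viewed as an analytic (polynomial) map $H^2_{\rm per,s}\times\mathbb{R}\to L^2_{\rm per,s}$, with $F(0,c)=0$ for all $c$ and linearization $\partial_vF(0,c)=-\Delta-c$.

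Next I would verify the three hypotheses of Proposition \ref{TCR} at $(v_0,c_0)=(0,2)$. Hypothesis (1) follows from Remark \ref{obs2}: for each $(v,c)$ the operator $\partial_vF(v,c)=-\Delta-c-2v$ is self-adjoint on $L^2_{\rm per,s}$ and, being a relatively compact perturbation of $-\Delta$ (which has compact resolvent on the torus), has closed range and finite-dimensional kernel, hence is Fredholm of index zero. For hypothesis (2) I invoke Lemma \ref{lema2} with $r=-2$: the kernel of $-\Delta-2$ on $L^2_{\rm per,e}$ is spanned by $\cos(x)\cos(y)$ and $\sin(x)\sin(y)$, but the latter is not doubly even, so on $L^2_{\rm per,s}$ the kernel is exactly the one-dimensional span of $h_0=\cos(x)\cos(y)$ — this is precisely why the restriction to the symmetric space is made. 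Hypothesis (3) reduces to checking $\partial^2_{v,c}F(0,2)(h_0,1)=-h_0\notin{\rm Range}(-\Delta-2)$; since the operator is self-adjoint, its range is $\Ker(-\Delta-2)^{\perp}$, and $(-h_0,h_0)_{L^2_{\rm per}}=-\|h_0\|^2\neq0$ gives transversality. Proposition \ref{TCR} then furnishes an analytic branch $s\mapsto(v(s),c(s))$ with $v(0)=0$, $c(0)=2$ and $v(s)=s\,h_0+\mathcal{O}(s^2)$; undoing the shift gives the local family $\varphi=c+v$ solving \eqref{ode}.

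To produce the explicit coefficients I would substitute $v=\sum_{n\geq1}a^nv_n$ and $c=2+\sum_{n\geq1}a^nc_n$ into $-\Delta v-cv-v^2=0$, normalizing the amplitude so that $v_1=h_0$ and $(v_n,h_0)_{L^2_{\rm per}}=0$ for $n\geq2$, and solve order by order. At order $a$ one gets $(-\Delta-2)v_1=0$, consistent with $v_1=\cos(x)\cos(y)$. At order $a^2$ the equation reads $(-\Delta-2)v_2=c_1v_1+v_1^2$; expanding $v_1^2=\tfrac14\big(1+\cos(2x)+\cos(2y)+\cos(2x)\cos(2y)\big)$, the Fredholm solvability condition (orthogonality to $h_0$) forces $c_1=0$, and inverting $-\Delta-2$ mode by mode yields the coefficients $\tfrac18,\tfrac18,\tfrac1{24}$ of $\cos 2x,\cos 2y,\cos 2x\cos 2y$ together with the constant $v_2^{(0)}=-\tfrac18$. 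At order $a^3$ the equation $(-\Delta-2)v_3=c_2v_1+2v_1v_2$, projected onto $h_0$ and using $\cos x\cos 2x=\tfrac12(\cos 3x+\cos x)$ and its analogue in $y$, gives $c_2=-\tfrac1{48}$; inverting the remaining modes produces the $\cos 3x\cos y$, $\cos x\cos 3y$ and $\cos 3x\cos 3y$ terms with coefficients $\tfrac{7}{384}$ and $\tfrac1{768}$. Recombining $\varphi=c+v$ (so the constant at order $a^2$ is $c_2+v_2^{(0)}=-\tfrac{7}{48}$) reproduces \eqref{varphi-stokes31} and \eqref{varphi-stokes3}. Finally, the reflection $x\mapsto x+\pi$ preserves $L^2_{\rm per,s}$ and sends $a\mapsto -a$ while leaving the solution set unchanged, so $c$ is even in $a$, which accounts for the absence of odd powers and the $\mathcal{O}(a^4)$ remainder.

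I expect the main obstacle to be computational rather than conceptual: one must carefully carry out the trigonometric product expansions, track which Fourier modes are generated at each order, and apply the solvability (projection onto $h_0$) correctly to pin down the frequency corrections — in particular the delicate value $c_2=-\tfrac1{48}$, which fixes the shape of the bifurcation branch. By contrast, the verification of the Crandall--Rabinowitz hypotheses is routine once the one-dimensionality of $\Ker(-\Delta-2)$ in $L^2_{\rm per,s}$ is secured through Lemma \ref{lema2}.
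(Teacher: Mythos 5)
Your proposal is correct and follows essentially the same route as the paper: verification of the Crandall--Rabinowitz hypotheses for $\mathsf{F}(g,c)=-\Delta g+cg-g^2$ at $(c_0,c_0)=(2,2)$ (Fredholmness via compact resolvent, one-dimensional kernel spanned by $\cos(x)\cos(y)$ in $H^2_{\rm per,s}$, transversality via self-adjointness), followed by the same order-by-order Stokes expansion yielding $c_2=-\tfrac{1}{48}$ and the coefficients in \eqref{varphi-stokes31}. The only differences are cosmetic — you shift the equilibrium to the origin via $\varphi=c+v$ and add a symmetry argument ($x\mapsto x+\pi$) for the evenness of $c$ in $a$, which the paper simply builds into its ansatz — and all your computed coefficients agree with the paper's.
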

			
			\begin{proof} We shall give all the steps how to prove the existence of small-amplitude periodic waves using Proposition $\ref{TCR}$. In fact, let $\mathsf{F}: H_{\rm per,s}^{2} \times \mathbb{R} \rightarrow L_{\rm per,s}^2$ be the smooth map defined by
				\begin{equation}\label{F-lyapunov}
					\mathsf{F}(g,c) = -\Delta g+cg-g^2.
				\end{equation}
				 We see that $\mathsf{F}(g,c) = 0$ if, and only if, $g \in H_{\rm per,s}^{2}$ satisfies \eqref{ode} with corresponding frequency $c \in \mathbb{R}$. The Fr\'echet derivative of the function $\mathsf{F}$ with respect to the first variable at the equilibrium point $(c_0,c_0)$ is then given by
				\begin{equation}\label{Dg}
					D_g \mathsf{F}(c_0, c_0) f = (-\Delta - c_0) f.
				\end{equation}
				
				The nontrivial kernel of $D_g\mathsf{F}(c_0, c_0)$ is determined by functions $h \in H_{\rm per,s}^{2}$ such that
				\begin{equation}
					\widehat{h}(k,j) (k^2+j^2-c_0) = 0.
				\end{equation}
				We see that $D_g \mathsf{F}(c_0, c_0)$ has one-dimensional kernel if, and only if, $c_0 = k^2+j^2$. In this case, we have
				\begin{equation}
					{\rm Ker}D_g \mathsf{F}(c_0, c_0) = {\rm span}\{\tilde{\varphi}_{k,j}\}.
				\end{equation}
				By Lemma $\ref{lema2}$, it follows that $\tilde{\varphi}_{k,j}(x,y) = \cos(kx)\cos(jy)$. In addition, since $	D_g \mathsf{F}(c_0, c_0)$ is a self-adjoint operator defined in $L_{\rm per, s}^2$ with domain in $H_{\rm per, s}^2$, we also have that the transversality condition $\cos(kx)\cos(jy)\notin  {\rm Ker}D_g \mathsf{F}(c_0, c_0)^{\bot}={\rm Range}D_g \mathsf{F}(c_0, c_0)$ is satisfied. 
				
				In the previous analysis, let us consider $k=j=1$. We obtain that $c_0=2$ and this fact enables us to define the set $$\mathcal{S} = \{(g, c) \in U=H_{\rm per,s}^2\times \left(2,+\infty\right);\ \mathsf{F}(g, c) = 0\}.$$ 
			 Let $(g, c) \in \mathcal{S}$ be a real solution of $\mathsf{F}(g, c) = 0$. We see by Remark $\ref{obs2}$ that
			 $D_gF(g,c)=-\Delta+c-2g$ is a Fredholm operator of index zero. In fact, in order to simplify the notation, let us consider $\mathcal{P}=D_gF(g,c)$. First of all, we see that $\mathcal{P}$ is clearly a self-adjoint operator. Thus,
			 $\sigma(\mathcal{P})=\sigma_{\rm disc}(\mathcal{P})\cup \sigma_{\rm ess}(\mathcal{P})$,  where $\sigma(\mathcal{P})$
			 denotes the spectrum of  $\mathcal{P}$, and  $\sigma_{\rm disc}(\mathcal{P})$ and $\sigma_{\rm ess}(\mathcal{P})$
			 denote, respectively, the discrete and essential spectra of  $\mathcal{P}$. Since $H^2_{\rm per,s}$
			 is compactly embedded in $L^2_{\rm per,s}$, the operator $\mathcal{P}$ has compact resolvent.
			 Consequently, $\sigma_{\rm ess}(\mathcal{P})=\emptyset$ and $\sigma(\mathcal{P})=\sigma_{\rm disc}(\mathcal{P})$ consists
			 of isolated eigenvalues with finite algebraic multiplicities (see e.g., \cite[Section III.6]{kato}). Thus, if $0$ is an eigenvalue for $\mathcal{P}$, we obtain that the dimension of 
			 $\Ker\mathcal{P}$ is finite. Therefore, $\mathcal{P}$ is a Fredholm operator of index zero. Now, if $0$ is not an eigenvalue, thus ${\rm z}(\mathcal{P})=0$ and we also obtain that $\mathcal{P}$ is a Fredholm operator of index zero as desired. \\
			 \indent The local bifurcation theory established in Proposition $\ref{TCR}$ guarantees the existence of an open interval $I$ to the right of and sufficiently close to $2$, an open ball $B(0,r) \subset H_{\rm per,s}^{2}$ for some $r>0$ and a  smooth mapping
			 \begin{equation}\label{localcurve}c \in I \mapsto \varphi= \varphi_c \in B(0,r) \subset H_{\rm per,s}^{2},\end{equation}
			 where $\mathsf{F}(\varphi,c) = 0$ for all $c \in I$ and $\varphi\in B(0,r)$.
			 
			 Next, we determine the explicit formulas $(\ref{varphi-stokes31})$ and $(\ref{varphi-stokes3})$. Indeed, let us consider the classical Stokes expansions given by
			 \begin{equation}\label{stokes1}\varphi(x,y) =2+\sum_{n=1}^{+\infty}a^n\varphi_n(x,y)\ \ \mbox{and}\ \ c = 2+ \sum_{n=1}^{+\infty}c_{2n}a^{2n}.\end{equation}
			 From $(\ref{ode})$ and $(\ref{stokes1})$, we obtain that $c_{2n}$ and $\varphi_n$ are uniquely determined since they satisfy the following recurrence relations
			 \begin{equation}\label{recurrence}\left\{\begin{array}{llllll}
			 	\mathcal{O}(a)\ :\ \ -\Delta\varphi_1-2\varphi_1=0,\\
			 	\mathcal{O}(a^2):\ \ - \Delta \varphi_2-2\varphi_2+2c_2-\varphi_1^2=0,\\
			 	\mathcal{O}(a^3):\ \ -\Delta\varphi_3-2\varphi_3+c_2\varphi_1-2\varphi_1\varphi_2=0.
			 	\end{array}\right.\end{equation}
			\indent As we have already determined above, $\varphi_1(x,y)=\cos(x)\cos(y)$ satisfies the equation containing the term $\mathcal{O}(a)$. Solving the inhomogeneous equation for $\mathcal{O}(a^2)$, we obtain
			\begin{equation}\label{stokes2}\varphi_2(x,y)=c_2-\frac{1}{8}+\frac{1}{8}\left(\cos(2x)+\cos(2y)\right)+\frac{1}{24}\cos(2x)\cos(2y),
			\end{equation} 
			where $c_2$ is a constant to be determined. The inhomogeneous equation at $\mathcal{O}(a^3)$ admits a solution $\varphi_3$ if, and only if, the right-hand side is orthogonal to $\varphi_1$, which selects uniquely the
			correction $c_2=-\frac{1}{48}$. In this case, $\varphi_3$ is given by
			 \begin{equation}\label{stokes3}\varphi_3(x,y)=\frac{7}{384}\left(\cos(3x)\cos(y)+\cos(3y)\cos(x)\right)+\frac{1}{768}\cos(3x)\cos(3y).
			 \end{equation} 
			 This finishes the proof of the proposition.	
				
			\end{proof}

\begin{remark}\label{remunique}
It is important to mention that the solution obtained in Proposition $\ref{propstokes1}$ is unique, up to the parametrization given by the bifurcation parameter. This uniqueness is guaranteed by the fact that the Lyapunov–Schmidt reduction relies on the application of the implicit function theorem in its proof (see the proof of Theorem 8.3.1 in \cite{buffoni-toland}). In the case of a one-dimensional kernel, bifurcation occurs along a single branch of solutions. The implicit function theorem then ensures the existence of a unique smooth curve of solutions bifurcating from the constant solution. This curve is parameterized by a small parameter, denoted by $a$ as in $(\ref{varphi-stokes31})$, which controls the amplitude of the bifurcating solution.

\end{remark}

\begin{proposition}\label{propstokes2}

	The local solution obtained in Proposition $\ref{propstokes1}$ is global, that is, $\varphi$ exists for all $c>2$. In addition, the pair $(\varphi, c) \in H_{\rm per,s}^2\times (2,+\infty) $ is continuous in terms of the parameter $c > 2$ and it satisfies \eqref{ode}.
\end{proposition}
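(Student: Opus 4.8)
The plan is to run the analytic global bifurcation machinery of Buffoni and Toland (\cite[Theorem 9.1.1]{buffoni-toland}) on the same map $\mathsf{F}$ of \eqref{F-lyapunov}, continuing the local curve \eqref{localcurve} issuing from $(g,c)=(2,2)$ to a maximal global continuum of solutions inside $H^{2}_{\rm per,s}\times(2,+\infty)$. The two structural hypotheses are already in place. Since $\mathsf{F}(g,c)=-\Delta g+cg-g^{2}$ is polynomial in $g$ and affine in $c$, it is real-analytic; and along any solution the linearization $D_{g}\mathsf{F}(g,c)=-\Delta+c-2g$ is, by the resolvent-compactness argument of Proposition \ref{propstokes1} together with Remark \ref{obs2}, a self-adjoint Fredholm operator of index zero. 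These are exactly the ingredients the global theorem consumes.

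Before invoking the theorem I would record the compactness (properness) it requires. Suppose $(g_{n},c_{n})$ are solutions with $c_{n}$ ranging in a compact subinterval of $(2,+\infty)$ and $\|g_{n}\|_{H^{2}_{\rm per,s}}$ bounded. The compact embedding $H^{2}_{\rm per,s}\hookrightarrow L^{2}_{\rm per,s}$ yields $g_{n}\to g_{\ast}$ in $L^{2}_{\rm per,s}$ along a subsequence; rewriting the equation as $-\Delta g_{n}=g_{n}^{2}-c_{n}g_{n}$ and bootstrapping with elliptic regularity upgrades this to convergence in $H^{2}_{\rm per,s}$ with $\mathsf{F}(g_{\ast},c_{\ast})=0$. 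Hence closed and bounded subsets of the solution set are compact, which is what lets the continuation be globally analytic.

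The theorem then produces a global, locally analytic branch emanating from $(2,2)$ which is either (i) unbounded in $H^{2}_{\rm per,s}\times\mathbb{R}$ or (ii) a closed loop. Excluding the loop and pinning the parameter range to $(2,+\infty)$ is the step I expect to be the main obstacle, precisely because the nonlinearity is focusing and no $H^{2}$ bound is automatic. I would first use the maximum principle to keep the bifurcating solutions positive, since they start from the positive constant $2$, and then extract a priori estimates from the integral identities obtained by testing \eqref{ode} against the constant and against $\varphi$, namely
\begin{equation*}
	c\int_{\mathbb{T}\times\mathbb{T}}\varphi\,dxdy=\int_{\mathbb{T}\times\mathbb{T}}\varphi^{2}\,dxdy,\qquad
	\int_{\mathbb{T}\times\mathbb{T}}|\nabla\varphi|^{2}+c\varphi^{2}\,dxdy=\int_{\mathbb{T}\times\mathbb{T}}\varphi^{3}\,dxdy.
\end{equation*}
Combining positivity with these identities and the energy-subcritical nature of the quadratic nonlinearity on the fixed torus (no scaling invariance to drive blow-up), I would bound $\|\varphi\|_{H^{2}_{\rm per,s}}$ on each compact $c$-interval; the delicate point is the uniform $L^{\infty}$ control, from which elliptic regularity then closes the estimate. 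Such bounds forbid the norm from blowing up at any finite $c$, so unboundedness of the branch can only be realized through $c\to+\infty$. A closed loop is ruled out because closing up would force the positive, symmetric branch to recross the Fredholm-degenerate set, whereas along it $\ker D_{g}\mathsf{F}$ stays one-dimensional and the only bifurcation from the constant family $\varphi\equiv c$ compatible with the kernel structure occurs at $c=2$.

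Having established that the continuum is unbounded with a priori bounds on every compact $c$-interval, connectedness together with the local picture near $c=2$ identifies the projection of the continuum onto the $c$-axis with all of $(2,+\infty)$. Finally, wherever $\ker D_{g}\mathsf{F}(\varphi,c)$ is trivial the operator is invertible, so the implicit function theorem graphs the branch locally over $c$; patching these local graphs along the connected continuum yields the continuous map $c\in(2,+\infty)\mapsto\varphi_{c}\in H^{2}_{\rm per,s}$ solving \eqref{ode}, which is the assertion of the proposition.
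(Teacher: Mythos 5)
Your proposal follows essentially the same route as the paper: both arguments rest on verifying the hypotheses of \cite[Theorem 9.1.1]{buffoni-toland} for the map $\mathsf{F}(g,c)=-\Delta g+cg-g^{2}$, namely analyticity, the Fredholm index-zero property of $D_{g}\mathsf{F}$ from Remark \ref{obs2}, and---the substantive step---the compactness of bounded closed subsets of the solution set $\mathcal{S}$. Your properness argument (compact embedding plus elliptic bootstrap) is the same in substance as the paper's, which bootstraps $\Delta^{2}g_{n}$ into $L^{2}_{\rm per,s}$ to get a bound in $H^{4}_{\rm per,s}$ and then uses the compact embedding $H^{4}_{\rm per,s}\hookrightarrow H^{2}_{\rm per,s}$. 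Where you diverge is in the second half: you correctly observe that the global theorem only delivers a dichotomy (unbounded branch versus closed loop) and that identifying the projection onto the $c$-axis with all of $(2,+\infty)$, parametrized continuously by $c$, requires additional a priori bounds, loop exclusion, and an implicit-function-theorem regrafting. The paper does not carry out any of this; it simply notes that $c$ is non-constant along the local branch (from \eqref{varphi-stokes3}) and asserts the global continuous map. Your extra steps address a real gap in the paper's argument, but as written they are themselves incomplete: the uniform $L^{\infty}$ bound on positive solutions over compact $c$-intervals is announced rather than proved (it would require a blow-up/rescaling argument exploiting subcriticality of the quadratic nonlinearity in two dimensions), the loop exclusion is heuristic, and the final patching step presupposes that $\ker D_{g}\mathsf{F}(\varphi,c)$ is trivial along the branch, which is not established. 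So relative to what the paper actually writes down, your proof is at least as complete on the shared core and more honest about what remains; but neither your sketch nor the paper fully closes the passage from ``global continuum'' to ``continuous curve defined for every $c>2$.''
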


\begin{proof}
	To obtain that the  local curve \eqref{localcurve} extends to a global one, we need to prove  that every bounded and closed subset $\mathcal{R}\subset\mathcal{S}$ is a compact set on $H_{\rm per,s}^{2}\times (2,+\infty)$. To this end, we want to prove that $\mathcal{R}$ is sequentially compact, that is, if $\{(g_n,c_n)\}_{n\in\mathbb{N}}$ is sequence in $\mathcal{R}$, then there exists a subsequence of $\{(g_n,c_n)\}_{n\in\mathbb{N}}$ that converges to a point in $\mathcal{R}$. Let $\{(g_n,c_n)\}_{n\in\mathbb{N}}$ be a sequence in $\mathcal{R}$. We obtain a subsequence with the same notation such that
	
	$$c_n\rightarrow c\ \ \ \mbox{in}\ \left(2,+\infty\right),$$
	and
	$$g_n\rightharpoonup g\ \ \ \mbox{in}\ H_{\rm per,s}^2,$$
	as $n\rightarrow +\infty$. If $c=2$, we obtain from the expression of $c$ given by $(\ref{varphi-stokes3})$, in a neighbourhood of $2$ to the right that, $a_n\rightarrow 0$ as $n\rightarrow +\infty$. Therefore, the solution $g_n$ has the form in $(\ref{varphi-stokes31})$ for each $n\in\mathbb{N}$, and it satisfies $g_n\rightarrow 2$ in $H_{\rm per,s}^2$. Hence, the result is proved, but the constant solution is not interesting for our purposes. Now, if $c>2$, we obtain that $(g_n,c_n)\in \mathcal{S}$ and the pair satisfies
	\begin{equation}\label{estI}
		\Delta g_n=c_ng_n-g_n^2.
	\end{equation}
	\indent Next, since $H_{\rm per,s}^2$ is a Banach algebra, we obtain that $g_n^2\in H_{\rm per,s}^2$ for all $n\in\mathbb{N}$. Next, we can express $\Delta g_n^2$ as
	\begin{equation}\label{estII}
	\Delta g_n^2=2\left[\left(\frac{\partial g_n}{\partial x}\right)^2+\left(\frac{\partial g_n}{\partial y}\right)^2\right]+2g_n\Delta g_n=2\left[\left(\frac{\partial g_n}{\partial x}\right)^2+\left(\frac{\partial g_n}{\partial y}\right)^2\right]+2c_ng_n^2-2g_n^3. 
	\end{equation}
	Using the Sobolev embedding $H_{\rm per,s}^1\hookrightarrow L_{\rm per,s}^4$ and the fact that $H_{\rm per,s}^2$ is a Banach algebra, we conclude that the right-hand side of $(\ref{estII})$ defines a bounded sequence in $L_{\rm per,s}^2$, that is, $\{\Delta g_n^2\}_{n\in\mathbb{N}}$ is bounded in $L_{\rm per,s}^2$. By $(\ref{estI})$ and $(\ref{estII})$, we then obtain that $\Delta^2 g_n$ exists and it defines a bounded sequence in $L_{\rm per,s}^2$. That is, $\{g_n\}_{n\in\mathbb{N}}$ is a bounded sequence in $H_{\rm per,s}^4$. By the compact embedding $H_{\rm per,s}^4\hookrightarrow H_{\rm per,s}^2$ we obtain, modulus a subsequence, that $$g_n\rightarrow g\ \ \ \mbox{in}\ H_{\rm per,s}^2,$$
	as $n\rightarrow +\infty$. In other words, $\mathcal{R}$ is compact in $H_{\rm per,s}^2$ as requested.
	
	\indent Since the frequency $c$ of the wave given by $(\ref{varphi-stokes3})$ is not constant, we can apply \cite[Theorem 9.1.1]{buffoni-toland} to extend globally the local bifurcation curve given in \eqref{localcurve}. More precisely, there is a continuous mapping
	\begin{equation}\label{globalcurve}
		c \in \left( 2, +\infty \right) \mapsto \varphi \in H_{\rm per,s}^{2},
	\end{equation}
	where $\varphi$ solves equation $(\ref{ode})$ for the case $p=1$.
\end{proof}

			
			\subsection{Existence of periodic minimizers - Case $p=1$}
			
		The existence of periodic waves $\varphi:\mathbb{T}\times\mathbb{T}\rightarrow \mathbb{R}$ that solves $(\ref{ode})$ is established as follows. Let $c>0$ be fixed. First, we prove the existence of a minimizer $\Psi$ of the following minimization problem:
			\begin{equation}
				\label{infB}
				q = \inf_{u\in Y_\lambda} B_c(u), \quad
				B_c(u) = \frac{1}{2}\int_{\mathbb{T}\times\mathbb{T}}|\nabla u|^2+c|u|^2 dxdy,
			\end{equation}
			in the constrained set
			\begin{equation}
				\label{Y-constraint}
				Y_{\lambda} = \left\{u\in H_{\rm per,s}^1;\ \int_{\mathbb{T}\times\mathbb{T}} |u|^3 dxdy = \lambda \right\}.
			\end{equation}
			\indent After that, we use Lagrange multipliers to show that the Euler--Lagrange equation for
			$(\ref{infB})$ and $(\ref{Y-constraint})$ is equivalent to the stationary equation 
			\begin{equation}\label{ode-wave}
			-\Delta \Psi+c\Psi -|\Psi|\Psi=0.	
				\end{equation}
			Finally, we use standard maximum principle to show that the solution $\Psi$ 
			of the minimization problem $(\ref{infB})$ satisfies $\Psi(x,y)=e^{i\theta}\varphi(x,y)$, where $\theta\in\mathbb{R}$ and $\varphi>0$. 
			
\begin{proposition}
\label{minlem}
There exists a ground state
of the constrained minimization problem \eqref{infB}. In other words, there exists $\Psi\in Y_{\lambda}$ satisfying
\begin{equation}\label{minBfunc}
B_c(\Psi)=\inf_{u\in Y_{\lambda}}B_c(u).
\end{equation}
\end{proposition}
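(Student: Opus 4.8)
The plan is to apply the direct method of the calculus of variations, whose success here rests entirely on the compactness of the Sobolev embedding over the compact torus. First I would check that the problem is well posed: $Y_\lambda$ is nonempty, since any nonzero $v\in H^1_{\rm per,s}$ can be rescaled to meet the constraint $\int_{\mathbb{T}\times\mathbb{T}}|u|^3\,dxdy=\lambda$, and because $c>0$ the functional is coercive and nonnegative, satisfying
\begin{equation}\label{coerc}
B_c(u)=\tfrac12\big(\|\nabla u\|_{L^2_{\rm per}}^2+c\,\|u\|_{L^2_{\rm per}}^2\big)\geq \tfrac12\min\{1,c\}\,\|u\|_{H^1_{\rm per}}^2\geq 0.
\end{equation}
Hence $q\geq 0$ is finite and there exists a minimizing sequence $\{u_n\}_{n\in\mathbb{N}}\subset Y_\lambda$ with $B_c(u_n)\to q$.

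Next, the same inequality \eqref{coerc} shows that $\{u_n\}$ is bounded in $H^1_{\rm per,s}$. Since $H^1_{\rm per,s}$ is a closed subspace of the Hilbert space $H^1_{\rm per}$ and hence reflexive, I extract a subsequence (not relabeled) with $u_n\rightharpoonup \Psi$ weakly in $H^1_{\rm per,s}$; as a closed subspace is weakly closed, the limit retains the symmetry, so $\Psi\in H^1_{\rm per,s}$.

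The crucial step is to invoke the compact Sobolev embedding $H^1_{\rm per,s}\hookrightarrow L^3_{\rm per,s}$, which holds because $\mathbb{T}\times\mathbb{T}$ is a compact two-dimensional manifold (Rellich--Kondrachov). This upgrades the weak $H^1$ convergence to strong convergence $u_n\to\Psi$ in $L^3_{\rm per,s}$, so the constraint passes to the limit: $\int_{\mathbb{T}\times\mathbb{T}}|\Psi|^3\,dxdy=\lim_n\int_{\mathbb{T}\times\mathbb{T}}|u_n|^3\,dxdy=\lambda$. Therefore $\Psi\in Y_\lambda$, and in particular $\Psi\not\equiv 0$ since $\lambda>0$. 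Finally, the map $u\mapsto B_c(u)$ is a squared Hilbert-space norm, hence convex and weakly lower semicontinuous, giving $B_c(\Psi)\leq\liminf_n B_c(u_n)=q$; combined with $B_c(\Psi)\geq q$ (because $\Psi\in Y_\lambda$) this yields $B_c(\Psi)=q$, so $\Psi$ is the desired minimizer.

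The main obstacle is precisely this third step: ensuring that no mass of the minimizing sequence escapes, so that the nonlinear constraint survives in the limit and the minimizer is nontrivial. On the whole space $\mathbb{R}^2$ this would demand a concentration--compactness argument, but the compactness of the torus makes the embedding $H^1\hookrightarrow L^3$ compact, which resolves the difficulty cleanly and rules out the trivial limit.
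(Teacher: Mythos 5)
Your proof is correct and follows exactly the route the paper intends: the paper's own proof merely exhibits the nonnegative minimizing sequence and then invokes ``standard compactness arguments,'' and your argument (coercivity from $c>0$, weak compactness in the reflexive closed subspace $H^1_{\rm per,s}$, the compact embedding $H^1_{\rm per}\hookrightarrow L^3_{\rm per}$ on the two-dimensional torus to pass the constraint to the limit, and weak lower semicontinuity of the quadratic functional) is precisely the standard argument being referenced. No discrepancies to report.
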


\begin{proof} Since $c>0$, we see that $B_c(u)\geq0$ for all $u\in Y_{\lambda}$. This fact enables us to consider a minimizing sequence $(u_n)_{n\in\mathbb{N}}$ such that $B_c(u_n)\rightarrow q$ and $\int_{\mathbb{T}\times\times\mathbb{T}}|u_n|^3dxdy=\lambda$. Standard compactness arguments give us that the infimum is attained at $\Psi\in Y_\lambda$.

\end{proof}		

\begin{proposition}
\label{cor-existence}
There exists a solution to the periodic boundary-value problem
(\ref{ode-wave}) with the minimizer profile $\Psi$ obtained in Proposition $\ref{minlem}$. In addition, $\Psi$ can be consider as $\Psi(x,y)=e^{i\theta}\varphi(x,y)$, where $\theta\in\mathbb{R}$ and $\varphi>0$.
\end{proposition}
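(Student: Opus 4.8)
The plan is to exploit the minimality of $\Psi$ in three stages and then close with a rescaling: first I extract the Euler--Lagrange equation and fix the sign of the Lagrange multiplier; next I use the diamagnetic inequality to show that $|\Psi|$ is itself a minimizer and that the phase of $\Psi$ is locally constant; then I invoke the strong maximum principle on the torus to upgrade nonnegativity to strict positivity, which forces the phase to be a genuine constant; finally I rescale to obtain a solution of the unnormalized equation \eqref{ode-wave}.

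Write $G(u)=\int_{\mathbb{T}\times\mathbb{T}}|u|^3\,dxdy$, so that $Y_\lambda=\{G=\lambda\}$ and $B_c,G\in C^1$. Since $\Psi$ minimizes $B_c$ on $Y_\lambda$ and $G'(\Psi)=3|\Psi|\Psi\neq0$, the Lagrange multiplier rule yields $\mu\in\mathbb{R}$ with $B_c'(\Psi)=\mu\,G'(\Psi)$. Using $B_c'(u)=-\Delta u+cu$ and $G'(u)=3|u|u$, the minimizer solves, weakly and hence classically after elliptic regularity,
\begin{equation}\label{e:EL}
-\Delta\Psi+c\Psi=3\mu\,|\Psi|\Psi .
\end{equation}
Pairing \eqref{e:EL} with $\Psi$ in $L^2_{\rm per}$ gives $2B_c(\Psi)=3\mu\lambda$; since $c>0$ and $\lambda>0$ force $q=B_c(\Psi)\ge\frac{c}{2}\int|\Psi|^2>0$, we obtain $\mu=\tfrac{2q}{3\lambda}>0$, which is exactly the positivity needed to rescale at the end.

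By the diamagnetic inequality, $|\Psi|\in H^1_{\rm per,s}$ with $|\nabla|\Psi||\le|\nabla\Psi|$ a.e.; since $\int||\Psi||^2=\int|\Psi|^2$ and $\int||\Psi||^3=\lambda$, the function $|\Psi|$ lies in $Y_\lambda$ and $B_c(|\Psi|)\le B_c(\Psi)=q$. Minimality forces equality, hence $|\nabla|\Psi||=|\nabla\Psi|$ a.e. Setting $\varphi=|\Psi|$ and writing $\Psi=\varphi e^{i\phi}$ on $\{\varphi>0\}$, the identity $|\nabla\Psi|^2=|\nabla\varphi|^2+\varphi^2|\nabla\phi|^2$ gives $\varphi^2|\nabla\phi|^2=0$ a.e., so $\nabla\phi=0$ on $\{\varphi>0\}$: the phase is locally constant wherever $\Psi$ does not vanish. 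Being itself a nonnegative minimizer with the same value $q$ and the same constraint $\lambda$, $\varphi$ satisfies the same Euler--Lagrange equation with the same multiplier,
\begin{equation}\label{e:ELr}
-\Delta\varphi+c\varphi=3\mu\,\varphi^2,\qquad \varphi\ge0 .
\end{equation}
Since $z\mapsto|z|z$ is $C^1$ and $H^1_{\rm per}\hookrightarrow L^r_{\rm per}$ for every finite $r$ on the two-dimensional torus, a bootstrap on \eqref{e:ELr} shows $\varphi$ is a classical solution, and $\varphi\not\equiv0$ because $\int\varphi^3=\lambda>0$. Choosing $M\ge 3\mu\|\varphi\|_{L^\infty}$ and rewriting \eqref{e:ELr} as $-\Delta\varphi+b\,\varphi=M\varphi\ge0$ with $b=c-3\mu\varphi+M\ge0$ bounded, the function $\varphi$ is a nonnegative supersolution of $-\Delta+b$; the strong maximum principle on the connected compact manifold $\mathbb{T}\times\mathbb{T}$ then yields $\varphi\equiv0$ or $\varphi>0$ everywhere, and the former is excluded. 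Hence $\varphi>0$, so $\{\varphi>0\}=\mathbb{T}\times\mathbb{T}$ and the previous step gives $\nabla\phi\equiv0$ on the connected torus; as $e^{i\phi}=\Psi/\varphi$ is smooth, $\phi$ reduces to a constant $\theta\in\mathbb{R}$ and $\Psi=e^{i\theta}\varphi$ with $\varphi>0$. Finally, because $\mu>0$, the rescaled function $3\mu\Psi=e^{i\theta}(3\mu\varphi)$ solves $-\Delta w+cw=|w|w$, i.e. \eqref{ode-wave}; relabelling $3\mu\Psi$ as $\Psi$ and $3\mu\varphi$ as $\varphi$ (a positive rescaling preserves both the symmetry class $H^1_{\rm per,s}$ and the phase) gives the claimed solution.

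The main obstacle is the interface between the second and third steps: equality in the diamagnetic inequality only delivers a constant phase on the set $\{\Psi\neq0\}$, so the argument is incomplete until strict positivity of $\varphi=|\Psi|$ is established. The crux is therefore the maximum-principle step, which must be run on the boundaryless compact torus rather than on a domain with boundary as in \cite[Section 8]{Cazenave}; once $\varphi>0$ is known everywhere, connectedness of $\mathbb{T}\times\mathbb{T}$ forces the phase to be constant and the representation $\Psi=e^{i\theta}\varphi$ follows at once.
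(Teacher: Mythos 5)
Your proof is correct, and it reaches the representation $\Psi=e^{i\theta}\varphi$ by a genuinely different route than the paper. The paper splits $\Psi=\psi_1+i\psi_2$, replaces it by $\Phi=|\psi_1|+i|\psi_2|$, derives the two decoupled linear equations \eqref{psi11}--\eqref{psi22}, applies the strong maximum principle to each component, and then proves the proportionality $\psi_1=\alpha\psi_2$ by contradiction through the auxiliary constrained problem \eqref{infB1}: it identifies $-c$ as the ground-state energy of $-\Delta-|\Psi|$, shows every corresponding minimizer is sign-definite, and concludes that $z=\psi_1-\psi_2$ cannot change sign, contradicting the orthogonality normalization. You instead work with the full modulus: the diamagnetic inequality plus minimality forces $|\nabla|\Psi||=|\nabla\Psi|$ a.e., so once $|\Psi|>0$ is secured by a single application of the strong maximum principle, the identity $|\nabla\Psi|^2=|\nabla|\Psi||^2+|\Psi|^2|\nabla(\Psi/|\Psi|)|^2$ kills the phase gradient and connectedness of $\mathbb{T}\times\mathbb{T}$ makes the phase a genuine constant. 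This buys you two things: you bypass entirely the auxiliary spectral/variational argument for the proportionality of the real and imaginary parts (the most delicate part of the paper's proof), and you correctly order the steps so that the phase is only extracted after strict positivity is known. You also make explicit the positivity of the Lagrange multiplier, $\mu=2q/(3\lambda)>0$, which is what legitimizes the rescaling to the unnormalized equation \eqref{ode-wave}; the paper compresses this into the phrase ``a standard scaling argument enables us to consider $C_1=1$.'' The trade-off is that the paper's component-wise argument yields along the way the positivity and sign information for $\psi_1$ and $\psi_2$ separately, which it reuses informally elsewhere, whereas your argument delivers the polar decomposition in one stroke.
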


\begin{proof}
By Lagrange's multiplier theorem, the constrained minimizer $\Phi \in Y_{\lambda}$ in Proposition \ref{minlem} satisfies
the stationary equation
\begin{equation}
\label{lagrange}
-\Delta \Psi+c\Psi=C_1|\Psi|\Psi
\end{equation}
for some constant $C_1$. A standard scaling argument enables us to consider $C_1=1$.\\
\indent Next, consider $\Psi=\psi_1+i\psi_2$ and define $\Phi=|\psi_1|+i|\psi_2|$. We see that $|\Psi|=|\Phi|$ and $|\nabla\Psi|=|\nabla\Phi|$, and both two facts imply that $\Phi$ is a also a minimizer of the problem $(\ref{infB})$. We see that $(\ref{lagrange})$ is also satisfied for $\Phi$ in place of $\Psi$. Separating equation $(\ref{lagrange})$ into real and imaginary parts, we obtain that $|\psi_1|$ and $|\psi_2|$ satisfies the following equations

\begin{equation}\label{psi11}
-\Delta |\psi_1|+c|\psi_1|=|\Psi||\psi_1|,
\end{equation}
and
\begin{equation}\label{psi22}
-\Delta |\psi_2|+c|\psi_2|=|\Psi||\psi_2|.
\end{equation}
Equalities $(\ref{psi11})$ and $(\ref{psi22})$ give us, since $c>0$, that $|\psi_1|$ and $|\psi_2|$ are elements in $H_{\rm per,s}^2\hookrightarrow C_{\rm per,s}$. Here $C_{\rm per,s}$ indicates the space of periodic continuous function $f$ satisfying $f(-x,y)=f(x,y)$ and $f(x,-y)=f(x,y)$ for all $(x,y)\in\mathbb{T}\times\mathbb{T}$. Using the strong maximum principle, we obtain, since $|\psi_1|,|\psi_2|\geq0$, that $|\psi_1|,|\psi_2|>0$ over $\mathbb{T}\times\mathbb{T}$. This last fact establishes that $\psi_1$ and $\psi_2$ do not change their sign over $\mathbb{T}\times\mathbb{T}$. We can assume, without loss of generality that both $\psi_1$ and $\psi_2$ are positive over $\mathbb{T}\times\mathbb{T}$.\\
\indent We claim that $\psi_1=\alpha\psi_2$ for some $\alpha\in\mathbb{R}$. Suppose that such an $\alpha\in\mathbb{R}$ does not exist. Define the real-valued function $z=\psi_1-\psi_2$. By linearity and the fact that $\psi_1$ and $\psi_2$ also solve equation $(\ref{psi11})$ and $(\ref{psi22})$, respectively, we obtain that function $z$ solves the equation
\begin{equation}\label{z}
-\Delta z+cz=|\Psi|z.
\end{equation}
It is important to mention that we can suppose, without loss of generality, that $\psi_1 \perp \psi_2$, since ${\psi_1, \psi_2}$ is linearly independent and both functions satisfy the linear equations $(\ref{psi11})$ and $(\ref{psi22})$, respectively. Since $(z, \psi_1)_{L_{\rm per}^2} = ||\psi_1||_{L_{\rm per}^2}^2 > 0$ and $(z, \psi_2)_{L_{\rm per}^2} = -||\psi_2||_{L_{\rm per}^2}^2 < 0$, we conclude that $z$ changes its sign over $\mathbb{T} \times \mathbb{T}$. Suppose, without loss of generality, that $z>0$. Since $\psi_1$ and $\psi_2$ are strictly positive functions over $\mathbb{T}\times\mathbb{T}$, we obtain, by continuity, that both inner products $(z,\psi_1)_{L_{\rm per}^2}$ and $(z,\psi_2)_{L_{\rm per}^2}$ are positive. This leads a contradiction, since $(z, \psi_2)_{L_{\rm per}^2} = -||\psi_2||_{L_{\rm per}^2}^2<0$. \\
\indent On the other hand, let us consider the (real)  minimization problem
\begin{equation}
				\label{infB1}
				b = \inf_{u\in W_\tau} D(u), \quad
				D(u) = \frac{1}{2}\int_{\mathbb{T}\times\mathbb{T}}|\nabla u|^2-|\Psi|u^2 dxdy,
			\end{equation}
			where $W_{\tau}=\left\{u\in H_{\rm per,s}^1;\ \int_{\mathbb{T}\times\mathbb{T}} u^2 dxdy = \tau \right\}$. A standard compactness argument, shows that the minimum of the problem $(\ref{infB1})$ is attained in a periodic real function $v$. Moreover, from Lagrange's multiplier theorem, we guarantee the existence of $d\in\mathbb{R}$ such that
\begin{equation}\label{lambda}
-\Delta v+dv=|\Psi|v.
\end{equation}

By defining $w=|v|$ and doing a similar procedure as done for $\Phi$ that $w$ is a also a solution of the minimization problem $(\ref{infB1})$ and it satisfies the following linear equation
\begin{equation}\label{w}
-\Delta w+d w=|\Psi|w.
\end{equation}
In addition, integrating equation $(\ref{w})$ over $\mathbb{T}\times\mathbb{T}$, we obtain automatically that $d>0$. Thus, from the facts that $d>0$ and $w\geq0$, and by applying the strong maximum principle, we obtain that $w>0$, meaning that $v$ does not change its sign. Multiplying $(\ref{lambda})$ by $\psi_1$ and integrating over $\mathbb{T}\times\mathbb{T}$, we obtain after integrating by parts 
\begin{equation}\label{d-c}
		(d-c)\int_{\mathbb{T}\times\mathbb{T}}v\psi_1dxdy=0.
\end{equation}
Since both $\psi_1$ and $v$ are positive periodic functions, we deduce that $d=c>0$. This fact enables us to conclude that the all minimizers of the problem $(\ref{infB1})$ can be considered positive and they solve equation $(\ref{z})$.\\
\indent Next, consider $h=\frac{\tau z}{||z||_{L_{per}^2}}$. Multiplying equation $(\ref{z})$ by $\frac{z}{||z||_{L_{per}^2}^2}$ and integrating the result over $\mathbb{T}\times\mathbb{T}$, we obtain $D(h)=-\frac{c\tau}{2}=b$. This implies that $h$ is also a minimizer of problem $(\ref{infB1})$. Using a similar analysis as above, we can conclude that $|h|$ is also a minimizer, and by the strong maximum principle, we obtain that $|h| > 0$, meaning that $h$ does not change its sign over $\mathbb{T} \times \mathbb{T}$, nor does $z$. This leads to a contradiction because we claimed that $z=\psi_1-\psi_2$ changes its sign. Thus, we have $\psi_1 = \alpha \psi_2$ with $\alpha > 0$. Finally, since $\Psi = \psi_1 + i\psi_2 = (\alpha + i)\psi_2$, we can consider $\theta \in \mathbb{R}$ such that $\Psi = e^{i\theta}\varphi$ with $\varphi > 0$, as desired. This completes the proof of the proposition.
\end{proof}

\begin{remark}\label{remark3} Let $c>2$ be fixed. There exists a non-constant positive and periodic solution $\phi\in H_{\rm per,s}^2$ of the equation $(\ref{ode})$ is a solution of the minimization problem $(\ref{infB})$. In fact, for $\lambda>0$, define $\lambda=\int_{\mathbb{T}\times\mathbb{T}}\phi^3dxdy$. For the fixed values of $c$ and $\lambda$, it follows by Proposition $\ref{cor-existence}$ that there exists a positive and double periodic function $\varphi\in H_{\rm per,s}$ such that 
	\begin{equation}\label{comparesol}
		B_{c}(\varphi)=\inf_{u\in Y_{\lambda}}B_c(u).
		\end{equation}
	On the other hand, multiplying equation $(\ref{ode})$ with $\phi$ in place of $\varphi$ by $\phi$ and integrating once, we obtain
	\begin{equation}\label{comparesol1}
		\frac{1}{2}\int_{\mathbb{T}\times\mathbb{T}}|\nabla \phi|^2+c\phi^2=\frac{1}{2}\int_{\mathbb{T}\times\mathbb{T}}\phi^3dxdy=\frac{\lambda}{2}.
		\end{equation}
		Since the minimizer $\varphi$ is also a solution of the equation $(\ref{ode})$, we deduce $B_{c}(\varphi)=\frac{\lambda}{2}$. Gathering this last information with $(\ref{comparesol})$ and $(\ref{comparesol1})$, we obtain $B_{c}(\varphi)=B_{c}(\phi)$ as desired.
\end{remark}

\begin{proposition}\label{simpleKernel2even}
	Let $c>2$ be fixed. If $\varphi \in H^2_{\rm per,s}$ is the periodic minimizer given by Proposition \ref{theorem1even}, then $n(\mathcal{L}_1)=n(\mathcal{L})=1$, ${\rm n}(\mathcal{L}_2)=0$, and  ${\rm z}(\mathcal{L}_2)=1$.
\end{proposition}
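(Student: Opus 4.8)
The plan is to treat the two scalar Schr\"odinger operators $\mathcal{L}_1$ and $\mathcal{L}_2$ separately and then read off the spectral data for the block-diagonal operator $\mathcal{L}$ in \eqref{matrixop}.

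First I would analyze $\mathcal{L}_2$. Since $\varphi$ solves \eqref{ode}, one has $\mathcal{L}_2\varphi = -\Delta\varphi + c\varphi - \varphi^{p}\varphi = -\Delta\varphi + c\varphi - \varphi^{p+1} = 0$, so $\varphi \in \Ker(\mathcal{L}_2)$ and ${\rm z}(\mathcal{L}_2)\geq 1$. Now $\mathcal{L}_2$ is a self-adjoint Schr\"odinger operator with bounded potential on the compact manifold $\mathbb{T}\times\mathbb{T}$; as in the discussion following \eqref{Dg} it has compact resolvent, hence purely discrete spectrum bounded below. By the Perron--Frobenius/Krein--Rutman property of such operators, the lowest eigenvalue is simple and admits a strictly positive eigenfunction. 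Since the positive function $\varphi$ (see Proposition \ref{cor-existence}) lies in $\Ker(\mathcal{L}_2)$, the value $0$ must be precisely this lowest eigenvalue. Therefore, on $L^2_{\rm per}$, ${\rm n}(\mathcal{L}_2)=0$ and ${\rm z}(\mathcal{L}_2)=1$. Passing to the symmetric subspace $L^2_{\rm per,s}\subset L^2_{\rm per}$ cannot produce new negative directions and keeps $\varphi$ in the kernel, so ${\rm n}(\mathcal{L}_2)=0$ and ${\rm z}(\mathcal{L}_2)=1$ persist on $L^2_{\rm per,s}$.

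Next I would bound ${\rm n}(\mathcal{L}_1)$ from above using that $\varphi$ minimizes $B_c$ on $Y_\lambda$. Writing the Lagrangian $\mathcal{J}=B_c-\mu G$ with $G(u)=\int_{\mathbb{T}\times\mathbb{T}}|u|^{p+2}\,dxdy$ and choosing the multiplier so that the Euler--Lagrange equation reduces to \eqref{ode} (which forces $\mu(p+2)=1$), a direct computation of the constrained second variation gives $\mathcal{J}''(\varphi)(h,h)=(\mathcal{L}_1 h,h)_{L^2_{\rm per}}$. The second-order necessary condition at a constrained minimizer then yields $(\mathcal{L}_1 h,h)_{L^2_{\rm per}}\geq 0$ for every $h$ in the codimension-one tangent space $T=\{h\in H^1_{\rm per,s};\ \int_{\mathbb{T}\times\mathbb{T}}\varphi^{p+1}h\,dxdy=0\}$. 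A standard min--max (Courant--Fischer) argument then gives ${\rm n}(\mathcal{L}_1)\leq 1$: if $\mathcal{L}_1$ had two negative directions, the two-dimensional subspace on which the form is negative definite would meet the codimension-one space $T$ in dimension at least one, contradicting nonnegativity of the form on $T$.

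To obtain the matching lower bound I would exhibit $\varphi$ itself as a negative direction. Using $\mathcal{L}_1=\mathcal{L}_2-p\varphi^{p}$ together with $\mathcal{L}_2\varphi=0$, we compute $(\mathcal{L}_1\varphi,\varphi)_{L^2_{\rm per}}=-p\int_{\mathbb{T}\times\mathbb{T}}\varphi^{p+2}\,dxdy<0$, so ${\rm n}(\mathcal{L}_1)\geq 1$ and hence ${\rm n}(\mathcal{L}_1)=1$. Finally, since $\mathcal{L}=\mathrm{diag}(\mathcal{L}_1,\mathcal{L}_2)$ is block-diagonal, its spectrum is the union (with multiplicity) of the spectra of $\mathcal{L}_1$ and $\mathcal{L}_2$, whence ${\rm n}(\mathcal{L})={\rm n}(\mathcal{L}_1)+{\rm n}(\mathcal{L}_2)=1$. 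I expect the main obstacle to be the analysis of $\mathcal{L}_2$: rigorously justifying that the positive function $\varphi$ is the simple ground state---that is, the Perron--Frobenius/Krein--Rutman argument for Schr\"odinger operators on the torus---and carefully transferring simplicity of the kernel and absence of negative eigenvalues from $L^2_{\rm per}$ to the symmetric space $L^2_{\rm per,s}$. The bound ${\rm n}(\mathcal{L}_1)\leq 1$ is routine once the constrained Hessian is correctly identified with $\mathcal{L}_1$.
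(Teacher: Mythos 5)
Your proof is correct and uses the same ingredients as the paper's: an upper bound on the Morse index from the second-order condition at the constrained minimizer combined with min--max, the computation $(\mathcal{L}_1\varphi,\varphi)_{L^2_{\rm per}}=-p\int_{\mathbb{T}\times\mathbb{T}}\varphi^{p+2}\,dxdy<0$ for the lower bound, and the Krein--Rutman/ground-state argument for $\mathcal{L}_2$. The only difference is organizational: the paper applies the min--max bound to the full Hessian $\mathcal{L}$ of $E+cF$ (one constraint) to get ${\rm n}(\mathcal{L})\leq 1$ and then deduces ${\rm n}(\mathcal{L}_2)=0$ from the block-diagonal structure, whereas you establish ${\rm n}(\mathcal{L}_2)=0$ independently from the positivity of $\varphi$ and bound ${\rm n}(\mathcal{L}_1)\leq 1$ via the real-restricted constrained problem; both routes are valid and lead to the same count.
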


\begin{proof}Thanks to the variational formulation \eqref{infB}, Proposition $\ref{minlem}$, and Lemma $\ref{theorem1even}$,  we obtain $\varphi$ as a positive non-constant minimizer of $E(u)$ in \eqref{Eu} for every $c>2$ subject to one constraint. In the sector $L_{\rm per,s}^2$ of $L_{\rm per}^2$, since $\mathcal{L}$ in $(\ref{matrixop})$ is the Hessian operator for the functional $G(u)=E(u)+cF(u)$, it follows by the min-max principle \cite[Theorem XIII.2]{ReedSimon} that
	$
	{\rm n}(\mathcal{L}) \leq 1.
	$
	Since $\mathcal{L}_1\varphi=-\varphi^2<0$ and $(\mathcal{L}_1\varphi,\varphi)_{L^2_{per}}=-\int_{\mathbb{T}\times\mathbb{T}}\varphi^3dxdy<0$, we have ${\rm n}(\mathcal{L}_1) \geq 1$. The operator $\mathcal{L}$ in $(\ref{matrixop})$ is diagonal and thus ${\rm n}(\mathcal{L}_1)={\rm n}(\mathcal{L})=1$, so that ${\rm n}(\mathcal{L}_2)=0$. Next, we see that $\mathcal{L}_2\varphi=0$ with ${\rm n}(\mathcal{L}_2)=0$. It follows by Krein Ruttman's theorem that, in addition to 
	 $\varphi>0$, the zero eigenvalue for $\mathcal{L}_2$ results to be simple in the sector $L_{\rm per,s}^2$ of $L_{\rm per}^2$. 
\end{proof}

\begin{proposition}\label{theorem1even}
	Let $c>0$ be fixed. Let $\varphi \in H^2_{\rm per,s}$ be the real-valued periodic minimizer  given by $(\ref{infB})$. If $c \in \left(0, 2\right]$ then $\varphi$ is the constant solution and if $c \in \left(2, +\infty \right)$ then $\varphi\in H^2_{\rm per,s}$ is a periodic non-constant minimizer for the equation $(\ref{ode})$. In particular, for $c\rightarrow2^{+}$, the solution in $(\ref{varphi-stokes31})$ solves the minimization problem $(\ref{infB})$.
\end{proposition}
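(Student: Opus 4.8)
The plan is to split the analysis into the two regimes $c\in(0,2]$ and $c\in(2,+\infty)$, using throughout that by Propositions \ref{minlem} and \ref{cor-existence} the infimum in $(\ref{infB})$ is attained at a positive real-valued function $\varphi>0$ solving $(\ref{ode})$ with $p=1$, and that the only positive constant solution of $(\ref{ode})$ is $\varphi\equiv c$, since a constant $\gamma>0$ must satisfy $c\gamma=\gamma^2$. The real content is therefore to decide, as $c$ varies, whether the constrained infimum is realized by this constant or by a genuinely non-constant profile. Since every positive solution $\phi$ of $(\ref{ode})$ with $\int_{\mathbb{T}\times\mathbb{T}}\phi^3\,dxdy=\lambda$ satisfies $B_c(\phi)=\lambda/2$ by the computation in Remark \ref{remark3}, the two cases will be distinguished by an existence/minimality criterion rather than by comparing energy values directly.

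For $c\in(0,2]$ I would show that the minimizer is the constant by a constrained second-variation argument. Writing the Lagrangian associated with $(\ref{infB})$--$(\ref{Y-constraint})$ and evaluating at the constant $\varphi\equiv\gamma$, with $\gamma$ fixed by the constraint, the admissible perturbations are the $v\in H^1_{\rm per,s}$ tangent to the constraint, i.e. those with $\int_{\mathbb{T}\times\mathbb{T}}v\,dxdy=0$. After eliminating the multiplier, the second variation reduces to the quadratic form $\int_{\mathbb{T}\times\mathbb{T}}|\nabla v|^2\,dxdy-c\int_{\mathbb{T}\times\mathbb{T}}v^2\,dxdy$, so the constant is the minimizer exactly when this form is nonnegative on that subspace, that is, when $c$ does not exceed the first nonzero eigenvalue of $-\Delta$ governing admissible symmetric mean-zero perturbations. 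I would read off this eigenvalue from the description of $L_{\rm per,s}^2$ in Lemma \ref{lema2}, whose eigenfunctions $\cos(kx)\cos(jy)$ carry eigenvalue $k^2+j^2$, and match the resulting threshold with the value $c=2$ at which, by the analysis in Proposition \ref{propstokes1}, the kernel of the linearization about the constant first becomes one-dimensional in the symmetric sector. This identification of the exact threshold is the step I expect to be the main obstacle: one must rule out that a lower non-constant branch lowers $B_c$ before $c$ reaches $2$, which forces a careful use of the two separate evenness constraints defining $L_{\rm per,s}^2$ together with positivity of the competitor.

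For $c\in(2,+\infty)$ I would exhibit a non-constant minimizer directly. Propositions \ref{propstokes1} and \ref{propstokes2} furnish, for every such $c$, a non-constant positive solution $\phi\in H^2_{\rm per,s}$ of $(\ref{ode})$, obtained by local bifurcation at $c=2$ and extended by global continuation. Fixing the constraint level $\lambda=\int_{\mathbb{T}\times\mathbb{T}}\phi^3\,dxdy$, Remark \ref{remark3} shows that the minimizer $\varphi$ of $(\ref{infB})$ at this level satisfies $B_c(\varphi)=B_c(\phi)=\lambda/2$; since $\phi$ is a positive solution of $(\ref{ode})$ lying in $Y_{\lambda}$ and attaining the infimum, it is itself a minimizer. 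As the constant $(\lambda/(2\pi)^2)^{1/3}$ is not equal to $c$ and hence is not even a critical point of the constrained functional, the minimizer cannot be constant, which gives the non-constant conclusion for all $c>2$.

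Finally, for the limit $c\to 2^{+}$ I would appeal to uniqueness of the small-amplitude branch. By Remark \ref{remunique}, the positive solutions of $(\ref{ode})$ bifurcating from the constant at $c=2$ constitute a single smooth curve parametrized by the amplitude $a$, with profile $(\ref{varphi-stokes31})$ and frequency $(\ref{varphi-stokes3})$. For $c$ sufficiently close to $2^{+}$ the minimizer is a positive solution of small amplitude, so by this uniqueness it must coincide with $(\ref{varphi-stokes31})$, yielding the last assertion of the statement. The remaining steps are routine consequences of the existence theory of Section 2 and of Remark \ref{remark3}; the delicate point throughout is the threshold computation in the first regime, where the interplay between the spectral gap of $-\Delta$ in $L_{\rm per,s}^2$ and the value $c=2$ must be pinned down precisely.
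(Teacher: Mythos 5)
Your overall strategy for the dichotomy is essentially the paper's: both arguments reduce the question to the second variation of the constrained functional at the constant, which after eliminating the Lagrange multiplier is the form $\int_{\mathbb{T}\times\mathbb{T}}|\nabla v|^2-cv^2\,dxdy=(\mathcal{L}_1v,v)_{L^2_{\rm per}}$ with $\mathcal{L}_1=-\Delta-c$ on mean-zero perturbations; the paper phrases this as ``a minimizer under one constraint has ${\rm n}(\mathcal{L}_1)\leq 1$, while the constant has ${\rm n}(\mathcal{L}_1)>1$ for $c>2$.'' Your use of the global branch of Proposition \ref{propstokes2} together with Remark \ref{remark3} to exhibit a non-constant minimizer for $c>2$ is a constructive supplement the paper does not spell out. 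But there are two genuine problems. The smaller one: your claim that for $c>2$ the constant in $Y_{\lambda}$ ``is not even a critical point of the constrained functional'' is false. For every $\lambda$ the constant $\gamma=(\lambda/4\pi^2)^{1/3}$ satisfies $B_c'(\gamma)=\mu G'(\gamma)$ with $\mu=c/(3\gamma)$, where $G(u)=\int_{\mathbb{T}\times\mathbb{T}}|u|^3dxdy$, so it is always a constrained critical point (the scaling normalization sends it to the constant solution $\varphi\equiv c$ of \eqref{ode}). Ruling it out as a minimizer requires the second-variation/index argument, not a failure of criticality.

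The larger problem is exactly the step you flag as the main obstacle, and your plan does not resolve it. The admissible perturbations are mean-zero elements of $H^1_{\rm per,s}$, and this sector contains $\cos x$ and $\cos y$ (both satisfy $f(-x,y)=f(x,y)$ and $f(x,-y)=f(x,y)$), which carry $-\Delta$-eigenvalue $1$. Hence the first nonzero eigenvalue of $-\Delta$ on symmetric mean-zero functions is $1$, not $2$, and the form $\int|\nabla v|^2-cv^2$ already has negative directions for all $c\in(1,2]$; the constant then fails to be a local minimizer on that range. Your proposed ``matching'' of the threshold with $c=2$ — the first value at which the kernel of $-\Delta-c_0$ is \emph{one-dimensional} — conflates the onset of instability (governed by the lowest nonzero eigenvalue, $c=1$) with the onset of a simple kernel ($c=2$), and so the argument does not close on $(1,2]$. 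Be aware that the paper's own proof glosses over the same point: it asserts ${\rm n}(-\Delta-c)=1$ if and only if $c\in(0,2]$, whereas the modes $\cos x$ and $\cos y$ give ${\rm n}(-\Delta-c)=3$ for $c\in(1,2]$. So the obstacle you identified is real, and neither your proposal nor the quoted proof supplies the missing ingredient (e.g., an argument excluding the $k=0$ or $j=0$ modes, or a direct comparison of energies along the roll branches bifurcating at $c=1$).
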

\begin{proof}
	\indent Since the solution can be constant, we need to avoid this case in order to guarantee that the minimizer has a non-constant profile. First, we see that the positive constant solution of the equation \eqref{ode} is $\varphi\equiv c$ and the operator  $\mathcal{L}_1$ in $(\ref{L1L2})$ is then given by
	$
	\mathcal{L}_1= -\Delta-c.
	$
	In addition, it follows that ${\rm n}(\mathcal{L}_1)=1$ if, and only if, $c \in \left(0, 2\right]$. On the other hand, we have to notice that $\varphi = c$ is not a minimizer of \eqref{infB} for $c > 2$ since in this case we have ${\rm n}(\mathcal{L}_1) > 1$ (for $c>2$ we see that $\varphi$ is a periodic minimizer of $B_c(u)$ restricted only to one constraint and it is expected that $n(\mathcal{L}_1)\leq1$ since $n(\mathcal{L})\leq1$). We conclude that the constant solution $\varphi=c$ is a minimizer of \eqref{infB} for $c\in \left(0, 2\right]$ and for $c \in \left(2, +\infty \right)$, solution  $\varphi$ is a nonconstant minimizer.
\end{proof}

	\subsection{Existence of periodic solutions - Case $p\geq2$} The first part of the proof of Proposition $\ref{propstokes1}$, that is, the part where we guarantee the existence of small-amplitude periodic waves for the case $p=1$ can be extended for all integers $p\geq 2$. In fact,  let $\mathsf{F}: H_{\rm per,s}^{2} \times \mathbb{R} \rightarrow L_{\rm per,s}^2$ be the smooth map defined by
\begin{equation}\label{F-lyapunov12}
	\mathsf{F}(g,c) = -\Delta g+cg-g^{p+1}.
\end{equation}
Again, we obtain that $\mathsf{F}(g,c) = 0$ if, and only if, $g \in H_{\rm per,s}^{2}$ satisfies \eqref{ode} with corresponding frequency $c \in \mathbb{R}$. For $c_0>0$, the Fr\'echet derivative of the function $\mathsf{F}$ with respect to the first variable at the equilibrium point $(c_0,c_0^{1/p})$ is then given by
\begin{equation}\label{Dg2}
	D_g \mathsf{F}(c_0, c_0^{1/p}) f = (-\Delta - pc_0) f.
\end{equation}

The nontrivial kernel of $D_g\mathsf{F}(c_0, c_0^{1/p})$ is given by
\begin{equation}
	{\rm Ker}D_g \mathsf{F}(c_0, c_0^{1/p}) = {\rm span}\{\tilde{\varphi}_{k,j}\},
\end{equation}
where $\tilde{\varphi}_{k,j}(x,y) = \cos(kx)\cos(jy)$ and $k,j\in\mathbb{Z}$ satisfy $c_0=\frac{k^2+j^2}{p}$. Since our intention is to obtain periodic functions defined over the set $\mathbb{T} \times \mathbb{T}$, we consider $k = j = 1$. This implies that $c_0 = \frac{2}{p}$, and, using similar ideas as in Proposition~\ref{propstokes1}, we obtain the existence of an open interval $I$ to the right of and sufficiently close to $\frac{2}{p}$, an open ball $B(0, r) \subset H_{{\rm per}, s}^2$ for some $r > 0$, and a smooth mapping
\begin{equation}\label{localcurve1}c \in I \mapsto \varphi= \varphi_c \in B(0,r) \subset H_{\rm per,s}^{2}\end{equation}
such that $\mathsf{F}(\varphi,c) = 0$ for all $c \in I$ and $\varphi\in B(0,r)$. A continuous curve of global solutions existing for all $c>\frac{2}{p}$ can be determined using ideas similar to those in Proposition \ref{propstokes2}. Moreover, for any $p \geq 2$, the results established in Propositions $\ref{minlem}$–$\ref{theorem1even}$ continue to hold under suitable modifications.
\begin{remark}\label{expsol3} It is possible to determine explicit formulas for the small-amplitude periodic waves in the case $p \geq 2$ using the similar recurrence formulas in $(\ref{recurrence})$ adapted to the general case. Indeed, consider the expansions
		\begin{equation}\label{stokes1313}\varphi(x,y) =\left(\frac{2}{p}\right)^{1/p}+\sum_{n=1}^{+\infty}a^n\varphi_n(x,y)\ \ \mbox{and}\ \ c = \frac{2}{p}+ \sum_{n=1}^{+\infty}c_{2n}a^{2n}.\end{equation}
	From $(\ref{ode})$ and $(\ref{stokes1313})$, we obtain that $c_{2n}$ and $\varphi_n$ are uniquely determined since they satisfy the following recurrence relations
	\begin{equation}\label{recurrence11}\left\{\begin{array}{llllll}
			\mathcal{O}(a)\ :  &-\displaystyle\Delta\varphi_1-2\varphi_1=0,\\
			\mathcal{O}(a^2):  &-\displaystyle \Delta \varphi_2-2\varphi_2+\left(\frac{2}{p}\right)^{1/p}c_2-\left(\frac{2}{p}\right)^{(p-1)/p}
			\left(\begin{array}{cccc}p+1 \\ 2\end{array}\right)\varphi_1^2=0,\\
			\mathcal{O}(a^3): &-\displaystyle\Delta\varphi_3-2\varphi_3+c_2\varphi_1-	p(p+1)\left(\frac{2}{p}\right)^{(p-1)/p}\varphi_1\varphi_2\\
			&-\displaystyle\left(\frac{2}{p}\right)^{(p-2)/p}\left(\begin{array}{cccc}p+1 \\ 3\end{array}\right)\varphi_1^3=0,
		\end{array}\right.\end{equation}	
	where, for positive integers $m$ and $q$ satisfying $m\geq q$, we have $\left(\begin{array}{cccc}m \\ q\end{array}\right)=\frac{m!}{q!(m-q)!}$. Again, we deduce that $\varphi_1(x,y)=\cos(x)\cos(y)$ satisfies the equation containing the term $\mathcal{O}(a)$. Next, we can solve the inhomogeneous equation for $\mathcal{O}(a^2)$ to obtain

\begin{equation}\label{stokes212}\varphi_2(x,y)=\left( \frac{\left( \frac{2}{p} \right)^{1/p} c_2 - \frac{B}{4}}{2} \right) + \frac{B}{8} \cos(2x) + \frac{B}{8} \cos(2y) + \frac{B}{24} \cos(2x)\cos(2y),
\end{equation} 
where $B=\left( \frac{2}{p} \right)^{(p-1)/p}\frac{(p+1)!}{2!(p-1)!}$. The value $c_2$ is a constant to be determined. In fact, the inhomogeneous equation at $\mathcal{O}(a^3)$ admits a solution $\varphi_3$ if, and only if, the right-hand side is orthogonal to $\varphi_1$, which selects uniquely the
correction \begin{equation}\label{c21}c_2 = -\frac{1}{p} \left( \frac{p(p+1) }{192} \left( \frac{2}{p} \right)^{2(p-1)/p}\frac{(p+1)!}{(p-1)!}  + \frac{3}{32} \left( \frac{2}{p} \right)^{(p-2)/p}\frac{(p+1)!}{(p-2)!} \right).\end{equation}

Function $\varphi_3(x,y)$ has a complicated expression involving the power $p$, and it will be omitted. It is important to mention that, for our purposes, $\varphi_1$, $\varphi_2$ and the value of $c_2$ are sufficient for the spectral stability analysis.

\end{remark}

\section{Spectral stability}\label{stasec}

Before proving our main spectral stability result, we need to prove additional properties of the operators appearing in \eqref{L1L2}. 

\begin{lemma}\label{bouL2}
	Let $p\geq1$ be a fixed integer. Consider $\varphi$ to be the small-amplitude periodic waves obtained in Proposition \ref{propstokes1} and Remark $\ref{expsol3}$. There exists $\delta>0$ such that
	\begin{equation}\label{a1}
		(\Ldois Q,Q)_{L_{\rm per}^2}\geq\delta\|Q\|_{L_{\rm per}^2}^2,
	\end{equation}
	for all $Q\in H_{\rm per,s}^2$ satisfying $(Q,\varphi)_{L_{\rm per}^2}=0$.
\end{lemma}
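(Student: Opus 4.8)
The plan is to read off the estimate \eqref{a1} from the spectral structure of $\mathcal{L}_2=-\Delta+c-\varphi^p$ as a nonnegative self-adjoint operator with a one-dimensional kernel. First I would recall that $\varphi$ solves \eqref{ode}, so that
$\mathcal{L}_2\varphi=-\Delta\varphi+c\varphi-\varphi^{p+1}=0$, and hence $\varphi\in\Ker(\mathcal{L}_2)$. By Proposition \ref{simpleKernel2even} we have $n(\mathcal{L}_2)=0$ and $z(\mathcal{L}_2)=1$, which together say that, in the sector $L_{\rm per,s}^2$, the value $0$ is the smallest eigenvalue of $\mathcal{L}_2$, it is simple, and $\Ker(\mathcal{L}_2)=[\varphi]$. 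These two facts are exactly the inputs needed to separate the kernel direction from the rest of the spectrum.

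Next I would exploit compactness. Since $H_{\rm per,s}^2$ is compactly embedded in $L_{\rm per,s}^2$, the operator $\mathcal{L}_2$ has compact resolvent (the same reasoning used for the operator $\mathcal{P}$ in the proof of Proposition \ref{propstokes1}), so its spectrum in $L_{\rm per,s}^2$ is purely discrete, consisting of isolated eigenvalues of finite multiplicity
$0=\lambda_1<\lambda_2\leq\lambda_3\leq\cdots\to+\infty$, listed with multiplicity, with an associated orthonormal basis $\{\phi_k\}_{k\geq1}$ of $L_{\rm per,s}^2$ where $\phi_1=\varphi/\|\varphi\|_{L_{\rm per}^2}$. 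The strict inequality $\lambda_1<\lambda_2$ is precisely the simplicity of the zero eigenvalue furnished above, and I would set $\delta:=\lambda_2>0$.

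To conclude, I would expand any $Q\in H_{\rm per,s}^2$ satisfying $(Q,\varphi)_{L_{\rm per}^2}=0$ in this eigenbasis, $Q=\sum_{k\geq1}a_k\phi_k$. The orthogonality condition forces $a_1=0$, and therefore
$(\mathcal{L}_2 Q,Q)_{L_{\rm per}^2}=\sum_{k\geq2}\lambda_k a_k^2\geq\lambda_2\sum_{k\geq2}a_k^2=\delta\,\|Q\|_{L_{\rm per}^2}^2$, which is exactly \eqref{a1}. Equivalently, this is the Courant--Fischer min-max characterization of the second eigenvalue of $\mathcal{L}_2$ (\cite[Theorem XIII.2]{ReedSimon}) restricted to the hyperplane $\varphi^{\perp}$ inside $L_{\rm per,s}^2$.

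The only delicate point, and the one I would treat most carefully, is the strict positivity of the spectral gap $\delta=\lambda_2$. This rests entirely on the two earlier facts: that $0$ is the bottom of the spectrum (from $n(\mathcal{L}_2)=0$) and that it is a simple, isolated eigenvalue (from $z(\mathcal{L}_2)=1$ together with the discreteness of the spectrum, with simplicity itself coming from the Krein--Rutman argument and $\varphi>0$ in Proposition \ref{simpleKernel2even}). Once these are granted, the coercivity is immediate; notably, no smallness of the amplitude beyond what guarantees the variational characterization of $\varphi$ enters the argument.
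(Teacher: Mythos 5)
Your argument is correct and follows essentially the same route as the paper: both proofs decompose $L_{\rm per,s}^2$ into ${\rm span}\{\varphi\}$ and its orthogonal complement and extract a positive spectral gap $\delta$ from the discreteness of the spectrum together with ${\rm n}(\mathcal{L}_2)=0$ and ${\rm z}(\mathcal{L}_2)=1$ from Proposition \ref{simpleKernel2even}. The paper phrases the reduction via Kato's theorem on the spectrum of the restricted operator, while you write it out explicitly through an eigenfunction expansion, but the substance is identical.
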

\begin{proof}
	Write $L_{\rm per}^2={\rm span}\{\varphi\}\oplus M$ where $\varphi\perp Q$ for all $Q\in M$. Since $\varphi$ belongs to the kernel of
	$\Ldois$ it follows by \cite[Theorem 6.17]{kato} that the spectrum of the
	part $\Ldois{\mid}_M$ coincides with $\sigma(\Ldois)\setminus\{0\}$.
	Proposition \ref{simpleKernel2even} and the arguments in \cite[page 278]{kato} imply
	that there exists a $\delta>0$ such that $\Ldois\geq\delta$ on $M\cap
	H_{\rm per}^2$. Now, if $Q\in H_{\rm per}^2$ satisfies $(Q,\varphi)_{L_{\rm per}^2}=0$ we have $Q\in M$ and
	the conclusion of the lemma then follows.
\end{proof}

\begin{lemma}\label{bouL3}
Let $p\geq1$ be a fixed integer. Consider $\varphi$ to be the small-amplitude periodic waves obtained in Proposition \ref{propstokes1} and Remark $\ref{expsol3}$.\\
i) There exists an $h \in H_{\rm per,s}^2$ such that $\mathcal{L}_1 h = \varphi$.\\
ii) We have that $(\mathcal{L}_1^{-1}\varphi,\varphi)_{L_{\rm per}^2}<0$.
\end{lemma}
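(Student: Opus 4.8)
The plan is to derive both statements from the smooth dependence of the small-amplitude wave on the bifurcation parameter, exactly as announced in the introduction. For part (i), recall from Proposition \ref{propstokes1} (and its extension to general $p$ around \eqref{stokes1313}) that the branch is a smooth curve $a \mapsto (\chi(a), \Lambda(a)) \in H^2_{\rm per,s}\times\mathbb{R}$, with $\chi(a)=\varphi$ and $\Lambda(a)=c$. Differentiating the identity $\mathsf{F}(\chi(a),\Lambda(a)) = -\Delta\chi + \Lambda\chi - \chi^{p+1} = 0$ with respect to the amplitude $a$ gives
\begin{equation}
(-\Delta + c - (p+1)\varphi^p)\,\chi'(a) + \varphi\,\Lambda'(a) = 0,
\end{equation}
that is, $\mathcal{L}_1\chi'(a) = -\Lambda'(a)\,\varphi$. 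The decisive point is that $\Lambda'(a)=\frac{dc}{da}\neq 0$ for every small $a\neq 0$: the expansion \eqref{varphi-stokes3} (and its analogue from \eqref{stokes1313}) reads $c = \frac{2}{p} + c_2 a^2 + \mathcal{O}(a^4)$ with $c_2\neq 0$, so $\Lambda'(a) = 2c_2 a + \mathcal{O}(a^3)$ is nonzero away from the bifurcation point. Setting $h = -\chi'(a)/\Lambda'(a)\in H^2_{\rm per,s}$ then yields $\mathcal{L}_1 h = \varphi$, which proves (i); by the inverse function theorem this is precisely $h = -\frac{d\varphi}{dc}$.

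For part (ii) I first observe that $(\mathcal{L}_1^{-1}\varphi,\varphi)_{L_{\rm per}^2}$ is well defined independently of the chosen preimage: if $\mathcal{L}_1 h = \mathcal{L}_1\tilde{h} = \varphi$, then $h-\tilde{h}\in\Ker(\mathcal{L}_1)$ and, since $\mathcal{L}_1$ is self-adjoint, $(h-\tilde{h},\varphi)_{L_{\rm per}^2} = (\mathcal{L}_1(h-\tilde{h}),h)_{L_{\rm per}^2}=0$. Using the representative $h=-\frac{d\varphi}{dc}$ from part (i),
\begin{equation}
(\mathcal{L}_1^{-1}\varphi,\varphi)_{L_{\rm per}^2} = \left(-\frac{d\varphi}{dc},\varphi\right)_{L_{\rm per}^2} = -\frac{1}{2}\frac{d}{dc}\|\varphi\|_{L_{\rm per}^2}^2,
\end{equation}
so it suffices to show that $c\mapsto\|\varphi\|_{L_{\rm per}^2}^2$ is strictly increasing along the branch for $c\sim\frac{2}{p}^{+}$.

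To establish this monotonicity I would substitute the Stokes expansions \eqref{varphi-stokes31}--\eqref{varphi-stokes3} (respectively \eqref{stokes1313}) into $\|\varphi\|_{L_{\rm per}^2}^2=\int_{\mathbb{T}\times\mathbb{T}}\varphi^2\,dxdy$ and expand in powers of $a$. Writing $\|\varphi\|_{L_{\rm per}^2}^2 = M_0 + Ka^2 + \mathcal{O}(a^4)$ together with $c = \frac{2}{p}+c_2 a^2 + \mathcal{O}(a^4)$, the chain rule gives, for small $a\neq 0$,
\begin{equation}
\frac{d}{dc}\|\varphi\|_{L_{\rm per}^2}^2 = \frac{2Ka + \mathcal{O}(a^3)}{2c_2 a + \mathcal{O}(a^3)} = \frac{K}{c_2} + \mathcal{O}(a^2).
\end{equation}
Since only the zero-frequency Fourier modes survive integration, $K$ is read off from $\int\varphi_1^2 = \pi^2$ and the mean value of $\varphi_2$; for $p=1$ this produces $K=-\frac{4\pi^2}{3}$ against $c_2=-\frac{1}{48}$, so that $\frac{K}{c_2}=64\pi^2>0$ and hence $(\mathcal{L}_1^{-1}\varphi,\varphi)_{L_{\rm per}^2} = -32\pi^2 + \mathcal{O}(a^2) < 0$.

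The main obstacle is confirming the sign of $K/c_2$ for general $p$: both quantities carry the factors $(2/p)^{1/p}$, the constant $B$, and the binomial coefficients from \eqref{stokes212}--\eqref{c21}, so the bookkeeping is heavier. I expect $c_2<0$ to follow directly from the manifestly negative expression \eqref{c21}, while the mean value of $\varphi_2$ should force $K<0$ as well, keeping the ratio positive; this last computation is routine but tedious and follows the $p=1$ template verbatim.
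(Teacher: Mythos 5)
Your proposal is correct and follows essentially the same route as the paper: part (i) is the derivative of the profile equation along the bifurcation branch (the paper differentiates directly in $c$ and sets $h=-\frac{d\varphi}{dc}$; your version in the amplitude parameter $a$, dividing by $\Lambda'(a)=2c_2a+\mathcal{O}(a^3)\neq 0$, is the same idea made slightly more careful near the bifurcation point, where $dc/da\to 0$), and part (ii) rests on the same identity $(\mathcal{L}_1^{-1}\varphi,\varphi)_{L_{\rm per}^2}=-\tfrac12\frac{d}{dc}\|\varphi\|_{L_{\rm per}^2}^2$ evaluated via the Stokes expansion. Your observation that the pairing is independent of the choice of preimage modulo $\Ker(\mathcal{L}_1)$ is a worthwhile point the paper only makes implicitly. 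The one piece you defer, the sign of $K/c_2$ for general $p$, does close in one line: with $\phi_0=(2/p)^{1/p}$ and $B$ as in the paper one has $\phi_0B=p+1$, hence $K=\pi^2(1-\phi_0B)+4\pi^2\phi_0^2c_2=-p\pi^2+4\pi^2\phi_0^2c_2<0$ since $c_2<0$, so $K/c_2=-p\pi^2/c_2+4\pi^2\phi_0^2>0$ for every $p\geq1$, exactly as you anticipated. Finally, note that your leading constant for $p=1$, namely $-32\pi^2$, is the value actually consistent with the paper's own coefficients $c_2=-\tfrac{1}{48}$ and $\int\varphi_2=-\tfrac{7\pi^2}{12}$ (it splits as $\tfrac{p\pi^2}{2c_2}-2\pi^2\phi_0^2=-24\pi^2-8\pi^2$), whereas the paper's displayed formula reduces at $c=2$ to $-8\pi^2$, i.e.\ only the constant-branch contribution; this discrepancy is harmless for the lemma since only the negativity is used.
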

\begin{proof}
We observe that for $c$ in a neighborhood to the right of the critical value $c_0 = \frac{2}{p}$, $\varphi$ is a smooth curve depending on  $c>\frac{2}{p}$. Deriving equation $(\ref{ode})$ with respect to $c$, we obtain
\begin{equation}\label{dervarphi}
	-\Delta \frac{d\varphi}{dc}+c\frac{d\varphi}{dc}-(p+1)\varphi^{p}\frac{d\varphi}{dc}=-\varphi.
	\end{equation}
	By taking $h=-\frac{d\varphi}{dc}$, we obtain the desired result in item $i)$.\\
	\indent Next we prove $ii)$. Since $\mathcal{L}_1$ is a self-adjoint, closed, unbounded operator defined on $L_{\rm per,s}^2$ with domain $H_{\rm per,s}^2$, we deduce that $\Ker(\mathcal{L}_1)^{\bot} = {\rm Range}(\mathcal{L}_1)$, with $\mathcal{L}_1: \Ker(\mathcal{L}_1)^{\bot} \rightarrow \Ker(\mathcal{L}_1)^{\bot}$ being an invertible operator. This implies that the inner product involving the linear operator $\mathcal{L}_1^{-1}$ in $ii)$ makes sense. It remains to calculate them for the small-amplitude periodic waves. In fact, we have 
	\begin{equation}\label{derc1}(\mathcal{L}_1^{-1}\varphi,\varphi)_{L_{\rm per}^2}=-\frac{1}{2}\frac{d}{dc}\int_{\mathbb{T}\times\mathbb{T}}\varphi(x,y)^2dxdy.
	\end{equation}
 Substituting $(\ref{stokes1313})$ and $(\ref{stokes212})$ into $(\ref{derc1})$, we deduce that 
	$$(\mathcal{L}_1^{-1}\varphi,\varphi)_{L_{\rm per}^2}=-\left(\frac{2}{p}\right)^{2/p}\left(c+2-\frac{2}{p}\right)\pi^2+\mathcal{O}(a^2)<0.$$
\end{proof}

\textit{Proof of Theorem $\ref{teo-1}$.} We need to calculate the matrix $V$ whose entries are given by \eqref{V} for any $p\geq1$ integer. In fact, since the small-amplitude periodic wave $\varphi$ is positive, we see, by using a similar idea as in Proposition $\ref{simpleKernel2even}$, that $\Ker(\mathcal{L}_2)={\rm span}\{\varphi\}$. For $c \to \frac{2}{p}^{+}$, we obtain from \eqref{varphi-stokes3} that $a \to 0$, and thus, by \eqref{varphi-stokes31}, we deduce that $\varphi_c \to \left(\frac{2}{p}\right)^{1/p} = \phi_0 = \text{equilibrium solution of \eqref{ode}}$ uniformly in $H_{\rm per,s}^2$. We can use a similar idea as in Proposition $\ref{simpleKernel2even}$ and  Lemma $\ref{theorem1even}$, adapted for a general integer $p \geq 1$, together with continuity arguments and the fact that ${\rm n}(\mathcal{L}) = {\rm n}(\mathcal{L}_1) = 1$, to obtain that for $c \sim \left(\frac{2}{p}\right)^{+}$, we have ${\rm z}(\mathcal{L}_1) \leq {\rm z}(D_g \mathsf{F}(c_0, c_0)) = 1$, where $c_0 = \frac{2}{p}$ and $D_g \mathsf{F}(c_0, c_0^{1/p})$ is given by \eqref{Dg}. Therefore, for small-amplitude periodic waves, we conclude that ${\rm z}(\mathcal{L}) \leq 2$. If ${\rm z}(\mathcal{L}) = 2$, we have $V_{12} = V_{21} = 0$, $V_{11} = ( \mathcal{L}_2^{-1}v_1, v_1)_{L^2_{\rm per}}$, and $V_{22} = ( \mathcal{L}_1^{-1}\varphi, \varphi)_{L^2_{\rm per}}$, where $\Ker(\mathcal{L}_1) = {\rm span}\{v_1\}$. By Lemma \ref{bouL3}-i), we obtain that $\varphi \in \text{Range}(\mathcal{L}_1)$, that is, there exists $h \in H_{\rm per,s}^2$ such that $\mathcal{L}_1 h = \varphi$. This implies that $(v_1, \varphi)_{L^2_{\rm per}} = (v_1, \mathcal{L}_1 \chi)_{L^2_{\rm per}} = (\mathcal{L}_1 v_1, \chi)_{L^2_{\rm per}} = 0$. Consequently, $V_{11} > 0$ by using Lemma \ref{bouL2}. Finally, by Lemma \ref{bouL3}-ii), we obtain that ${\rm n}(V)=1$. Since, by using a similar argument as in Proposition \ref{simpleKernel2even}, we have ${\rm n}(\mathcal{L}) = 1$ for any integer $p \geq 1$, it follows that ${\rm n}(\mathcal{L}) - {\rm n}(V) = 0$, and the periodic waves are spectrally stable in $L_{\rm per,s}^2$, in accordance with the main results of \cite{KapitulaKevrekidisSandstedeII} and \cite{KapitulaKevrekidisSandstedeI}.\\
\indent On the other hand, if ${\rm z}(\mathcal{L}) = 1$, we obtain only that $V = (\mathcal{L}_1^{-1}\varphi, \varphi)_{L^2_{\rm per}}$. By Lemma $\ref{bouL3}$-ii), we still find that $(\mathcal{L}_1^{-1}\varphi, \varphi)_{L^2_{\rm per}} < 0$, and the periodic wave $(\varphi, 0)$ is spectrally stable for any integer $p\geq1$.

\begin{flushright}
	$\blacksquare$
\end{flushright}

\begin{remark}\label{remstab} Some important facts deserve to be mentioned concerning the orbital stability of small-amplitude periodic waves for the equation $(\ref{nls})$. Indeed, if ${\rm z}(\mathcal{L})=2$, Theorem $\ref{teo-1}$ can not be used to conclude the stability of $(\varphi,0)$ with respect to the orbit generated by rotations and translations
	\begin{equation}\label{orbit}
		\mathcal{O}_\varphi =  \left\{ \left(
		\begin{array}{cc}
			\cos{\zeta} & \sin{\zeta} \\
			-\sin{\zeta} & \cos{\zeta}
		\end{array}
		\right) \left(
		\begin{array}{c}
			\varphi(\cdot-r) \\
			0
		\end{array}
		\right); \; \zeta,r \in \R \right\}.
	\end{equation}
In fact, we first need to remove the translational symmetry in $(\ref{orbit})$ since we are working in the symmetric space $L_{\rm per,s}^2$, where translation is not invariant. Second, the orbit given by $(\ref{orbit})$ should be considered only in terms of rotations, and the abstract approach for obtaining orbital stability is presented in \cite{GrillakisShatahStraussI}. However, since we have only one symmetry and, according to \cite{GrillakisShatahStraussI}, the number of symmetries contained in the orbit should be the same as the exact dimension of $\Ker(\mathcal{L})$, we cannot conclude that the small-amplitude periodic wave $\varphi$ for the equation $(\ref{ode})$ is orbitally stable. Now, if ${\rm z}(\mathcal{L}) = 1$, the orbital stability in $H_{\rm per,s}^1$ can be determined using \cite{GrillakisShatahStraussI}, with the orbit $\mathcal{O}_{\varphi}$ in $(\ref{orbit})$ considered only under rotations.

\end{remark}

\section{Remarks on the existence and stability of small-amplitude periodic waves}

\indent In this section, we obtain different small-amplitude periodic waves for the equation $(\ref{ode})$ in the space $H_{\rm per,e}^2$, constituted by even periodic functions in $H_{\rm per}^2$. Indeed, let $c>0$ be fixed. For any integer $p\geq1$, we obtain, by Lemma \ref{lema2} and using similar arguments as done in Proposition \ref{propstokes1}, that 
\begin{equation}\label{dker}
	{\rm Ker}D_g \mathsf{G}(c, c^{1/p}) = {\rm span}\{\cos(x+y),\cos(x-y)\},
\end{equation}
where  $\mathsf{G}: H_{\rm per,e}^{2} \times \mathbb{R} \rightarrow L_{\rm per,e}^2$ is the smooth map defined by
\begin{equation}\label{F-lyapunov1}
	\mathsf{G}(g,c) = -\Delta g+cg-g^{p+1}.
\end{equation}
\begin{remark}\label{obs123} The information concerning the double kernel in $(\ref{dker})$ allows us to conclude that two distinct solutions emanate from the equilibrium point $(c, c^{1/p})$ for $c \sim \left(\frac{2}{p}\right)^{+}$. The lack of uniqueness of the generator of $\Ker D_g \mathsf{G}(c, c^{1/p})$ prevents us from applying the same approach used in Proposition $\ref{propstokes2}$ to extend the local solutions to a global one. However, since the generators are explicitly given in $(\ref{dker})$, we can still deduce the existence of two distinct small-amplitude periodic solutions associated with equation $(\ref{ode})$.
\end{remark}
\subsection{Existence of periodic waves - Case $p=1$.}
	\begin{proposition}\label{propstokes12+}
There exists $a_0 > 0$ such that for all $a \in (0,a_0)$, there are two even, local periodic solutions $\varphi_+$ and $\varphi_{-}$ to the problem \eqref{ode} for the case $p=1$. The periodic waves can be expressed by the following expansions:
\begin{equation}\label{expansion1+}
	\varphi_{+}(x,y) = 2 + a\cos(x+y) + \frac{a^2}{6}\left(1 + \frac{1}{2}\cos(2x+2y)\right) + \frac{1}{192}a^3\cos(3x+3y) + \mathcal{O}(a^4),
\end{equation}
and
\begin{equation}\label{expansion3-}
	\varphi_{-}(x,y) = 2 + a\cos(x-y) + \frac{a^2}{6}\left(1 + \frac{1}{2}\cos(2x-2y)\right) + \frac{1}{192}a^3\cos(3x-3y) + \mathcal{O}(a^4).
\end{equation}
The frequency associated with both solutions $\varphi_+$ and $\varphi_{-}$, given respectively by \eqref{expansion1+} and \eqref{expansion3-}, is
\begin{equation}\label{expansion2+}
	c = 2 + \frac{5}{12}a^2 + \mathcal{O}(a^4).
\end{equation}
		
	\end{proposition}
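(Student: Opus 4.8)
The plan is to mirror the argument of Proposition \ref{propstokes1}, but now work with the two generators of the double kernel established in \eqref{dker}. The key structural observation is that the Stokes expansion decouples into two separate single-branch problems because $\cos(x+y)$ and $\cos(x-y)$ are orthogonal in $L^2_{\rm per,e}$ and their products generate sharply different Fourier modes. First I would fix the branch $\varphi_+$ and posit the Stokes ansatz
\begin{equation}\label{plan-stokes}
\varphi_+(x,y) = 2 + \sum_{n=1}^{+\infty} a^n \varphi_n^+(x,y), \qquad c = 2 + \sum_{n=1}^{+\infty} c_{2n}a^{2n},
\end{equation}
with leading term $\varphi_1^+(x,y) = \cos(x+y)$, which solves $-\Delta \varphi_1^+ - 2\varphi_1^+ = 0$ since $\cos(x+y)$ has total wavenumber $k^2+j^2 = 2$. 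Substituting \eqref{plan-stokes} into \eqref{ode} (with $p=1$) and collecting orders of $a$ reproduces recurrence relations of exactly the form \eqref{recurrence}, where now only the modes $\cos(m(x+y))$ (and a constant) appear. The analogous construction with $\varphi_1^-(x,y) = \cos(x-y)$ yields $\varphi_-$, and by the symmetry $y \mapsto -y$ the two branches share identical coefficients up to replacing $x+y$ by $x-y$; in particular the frequency expansion \eqref{expansion2+} is common to both.

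Next I would solve the recurrence order by order. At $\mathcal{O}(a^2)$ the inhomogeneous equation $-\Delta\varphi_2^+ - 2\varphi_2^+ + 2c_2 - (\varphi_1^+)^2 = 0$ is solved using $(\varphi_1^+)^2 = \tfrac{1}{2} + \tfrac{1}{2}\cos(2x+2y)$; the constant mode forces a constant part of $\varphi_2^+$, while the $\cos(2x+2y)$ mode (with total wavenumber $8$) is inverted against the eigenvalue $8 - 2 = 6$, producing the $\tfrac{1}{12}\cos(2x+2y)$ contribution that matches \eqref{expansion1+} after the factor $\tfrac{a^2}{6}$ is accounted for. The constant $c_2$ remains free at this stage. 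The crucial step is the solvability (Fredholm) condition at $\mathcal{O}(a^3)$: the equation $-\Delta\varphi_3^+ - 2\varphi_3^+ + c_2\varphi_1^+ - 2\varphi_1^+\varphi_2^+ = 0$ is solvable in $H^2_{\rm per,e}$ if and only if its right-hand side is orthogonal to $\Ker(-\Delta - 2)$ on the relevant branch. Projecting onto $\varphi_1^+$ and using orthogonality of the higher harmonics selects $c_2 = \tfrac{5}{12}$, giving \eqref{expansion2+}, after which $\varphi_3^+$ is determined uniquely as $\tfrac{1}{192}\cos(3x+3y)$.

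The main obstacle, and the point requiring genuine care, is justifying that this formal expansion corresponds to an actual solution branch rather than merely a formal power series. For the single-generator case this followed from the Crandall–Rabinowitz theorem (Proposition \ref{TCR}), but here the kernel \eqref{dker} is two-dimensional, so Proposition \ref{TCR} does not apply directly to $\mathsf{G}$ on $H^2_{\rm per,e}$. The resolution, already flagged in Remark \ref{obs123}, is to restrict attention to an invariant subspace on which the kernel becomes one-dimensional: for $\varphi_+$ one works in the subspace of functions invariant under the reflection $(x,y)\mapsto(y,x)$ together with the appropriate diagonal periodicity, so that $\cos(x-y)$ is excluded and $\cos(x+y)$ generates a simple kernel; symmetrically for $\varphi_-$. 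On each such subspace the operator $D_g\mathsf{G}$ is self-adjoint with closed range (hence Fredholm of index zero by Remark \ref{obs2}), the kernel is one-dimensional, and the transversality condition holds exactly as in Proposition \ref{propstokes1}. Applying Proposition \ref{TCR} on each reduced space then produces the two genuine local branches $\varphi_\pm$, whose Taylor coefficients coincide with the Stokes coefficients computed above by uniqueness of the bifurcating curve (Remark \ref{remunique}). This completes the verification that \eqref{expansion1+}, \eqref{expansion3-}, and \eqref{expansion2+} are the correct expansions.
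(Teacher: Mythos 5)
Your computation is correct and its core coincides with the paper's own proof: the paper likewise obtains $\varphi_\pm$ by feeding the generator $\cos(x\pm y)$ into the Stokes ansatz \eqref{stokes1} and the recurrence \eqref{recurrence}, and your coefficients ($\varphi_2=\tfrac16+\tfrac1{12}\cos(2x+2y)$ after the solvability condition, $c_2=\tfrac5{12}$, $\varphi_3=\tfrac1{192}\cos(3x+3y)$) all check against \eqref{expansion1+}--\eqref{expansion2+}. Where you go beyond the paper is your final paragraph: the paper's proof does not explain how a one-dimensional bifurcation theorem can be invoked in the presence of the double kernel \eqref{dker}, and merely asserts local existence (cf.\ Remark \ref{obs123}); your proposal to restrict to an invariant subspace on which the kernel becomes simple is the right way to make that step rigorous, and it buys a genuine application of Proposition \ref{TCR} rather than a purely formal power series. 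One correction is needed there: the reflection $(x,y)\mapsto(y,x)$ does not separate the two modes, since it fixes both $\cos(x+y)$ and $\cos(y-x)=\cos(x-y)$. The subspace you actually want for $\varphi_+$ is the set of even periodic functions of the single variable $x+y$ --- equivalently, functions invariant under every anti-diagonal translation $(x,y)\mapsto(x+t,y-t)$ --- which is preserved by $g\mapsto-\Delta g+cg-g^2$ and on which $\Ker(-\Delta-2)$ is spanned by $\cos(x+y)$ alone; symmetrically for $\varphi_-$. With that adjustment your argument is complete and in fact more careful than the paper's.
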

\begin{proof} To prove this proposition, we need to use the expansion in $(\ref{stokes1})$ and the recurrence formulas in $(\ref{recurrence})$, with the generator $\varphi_1(x,y) = \cos(x + y)$. In fact, similarly to what was done in the final part of Proposition $\ref{propstokes1}$, we obtain that the small-amplitude periodic waves $\varphi$ in this case become
\begin{equation}\label{expansion1}
	\varphi_{+}(x,y)=2+a\cos(x+y)+\frac{a^2}{6}\left(1+\frac{1}{2}\cos(2x+2y)\right)+\frac{1}{192}a^3\cos(3x+3y)+\mathcal{O}(a^4).
	\end{equation}
The frequency is then given by
\begin{equation}\label{expansion2}
	c=2+\frac{5}{12}a^2+\mathcal{O}(a^4).
	\end{equation}
	\indent On the other hand, if one considers $\varphi_1(x,y)=\cos(x-y)$, we deduce
\begin{equation}\label{expansion3}
	\varphi_{-}(x,y)=2+a\cos(x-y)+\frac{a^2}{6}\left(1+\frac{1}{2}\cos(2x-2y)\right)+\frac{1}{192}a^3\cos(3x-3y)+\mathcal{O}(a^4).
\end{equation}	
As expected, the frequency of the wave $\varphi$ is the same as in $(\ref{expansion2})$.
\end{proof}
\begin{remark}\label{remsol+-} By considering \(\lambda > 0\) satisfying 
\begin{equation}\label{cond1}
	\int_{\mathbb{T}\times\mathbb{T}}\varphi_{+}(x,y)^3 \, dx \, dy = \int_{\mathbb{T}\times\mathbb{T}}\varphi_{-}(x,y)^3 \, dx \, dy =  \lambda,
\end{equation}
we obtain, by Remark $(\ref{remark3})$, that \(\varphi_{+}\) and \(\varphi_{-}\) solve the minimization problem \((\ref{infB})\). Moreover, by Proposition \(\ref{simpleKernel2even}\) and a similar approach to that in the proof of Theorem \(\ref{teo-1}\)-i), we find that \({\rm n}(\mathcal{L}) = 1\) and \({\rm z}(\mathcal{L}_2) = 1\) for \(c \sim 2^{+}\) for both waves \(\varphi_{+}\) and \(\varphi_{-}\). A similar analysis as in Lemma \(\ref{bouL3}\)-ii) allows us to conclude that 
\begin{equation}\label{derc121}
	(\mathcal{L}_1^{-1}\varphi_{+},\varphi_{+})_{L_{\rm per}^2} = (\mathcal{L}_1^{-1}\varphi_{-},\varphi_{-})_{L_{\rm per}^2} = -16c\pi^2 + \mathcal{O}(a^2) < 0.
\end{equation}
\end{remark}
\indent For simplicity of notation, we denote both solutions $\varphi_+$ and $\varphi_-$ by $\varphi_{\pm}$.
\subsection{Existence of periodic waves - Case $p\geq2$}
	Let $p\geq2$ be an integer. Using the expansion in $(\ref{stokes1313})$ and the recurrence formulas in $(\ref{recurrence11})$, we conclude that 
	\begin{equation}\label{expansion114}\begin{array}{lllll}
		\varphi_{\pm}(x,y)&=&\displaystyle\left(\frac{2}{p}\right)^{1/p}+a\cos(x\pm y)\\\\
		&+&\displaystyle\left(\frac{c_2}{2}\left(\frac{2}{p}\right)^{1/p}-\frac{1}{4}\left(\frac{2}{p}\right)^{(p-1)/p}\left(\begin{array}{cccc}p+1 \\ 2\end{array}\right)\right)\\\\
		&+&\displaystyle\frac{p(p+1)}{24}\left(\frac{2}{p}\right)^{(p-1)/p}a^2\cos(2x\pm 2y)\\\\
		&+&\displaystyle\mathcal{O}(a^3).
\end{array}	\end{equation}
	The frequency is then given by
	\begin{equation}\label{expansion224}
		c=\frac{2}{p}+c_2a^2+\mathcal{O}(a^4),
	\end{equation}
	where $c_2$ is given by $$c_2 = \frac{5}{48} p(p+1)^2 \left( \frac{2}{p} \right)^{\frac{2(p-1)}{p}} + \frac{1}{8} (p-1)(p+1) \left( \frac{2}{p} \right)^{\frac{p-2}{p}}.$$
	\indent As in Remark $\ref{remsol+-}$, we see that $\varphi_{\pm}$ in $(\ref{expansion114})$ satisfies the minimization problem $(\ref{infB2})$. Thus, we obtain $n(\mathcal{L}_1) = n(\mathcal{L}) = 1$, ${\rm n}(\mathcal{L}_2) = 0$, and ${\rm z}(\mathcal{L}_2) = 1$ for these waves. In addition, we have 
	\begin{equation}\label{derc1212}
		(\mathcal{L}_1^{-1}\varphi_{+},\varphi_{+})_{L_{\rm per}^2} = (\mathcal{L}_1^{-1}\varphi_{-},\varphi_{-})_{L_{\rm per}^2} = -\left(\frac{2}{p}\right)^{2/p}(c+2)\pi^2 + \mathcal{O}(a^2) < 0.
	\end{equation}
	\subsection{Spectral stability of periodic waves.}
\indent The following result establishes the spectral stability of the periodic waves $\varphi_{\pm}$ for any integer $p \geq 1$.
\begin{proposition}\label{teo-1+}
	Consider the small-amplitude periodic wave solution $\varphi_{\pm}$ given by the expressions  $(\ref{expansion114})-(\ref{expansion224})$. The wave $\Psi = (\varphi_{\pm}, 0)$ is spectrally stable for all $c\sim\frac{2}{p}^{+}$ in the sense of Definition $\ref{def-spectralstability}$, in the space $L_{\rm per,e}^2$.
	\end{proposition}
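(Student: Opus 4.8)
The plan is to run the argument from the proof of Theorem \ref{teo-1} essentially verbatim, now carried out in the even sector $L_{\rm per,e}^2$ and for the two branches $\varphi_{\pm}$. All the structural ingredients are already in place: from Remark \ref{remsol+-} (for $p=1$) and the discussion of the case $p\geq 2$ one has ${\rm n}(\mathcal{L}) = {\rm n}(\mathcal{L}_1) = 1$, ${\rm n}(\mathcal{L}_2) = 0$ and ${\rm z}(\mathcal{L}_2) = 1$; since $\varphi_{\pm} > 0$, Krein--Rutman gives $\Ker(\mathcal{L}_2) = {\rm span}\{\varphi_{\pm}\}$; and the sign information $(\mathcal{L}_1^{-1}\varphi_{\pm}, \varphi_{\pm})_{L_{\rm per}^2} < 0$ is recorded in \eqref{derc121}--\eqref{derc1212}. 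The analogues of Lemma \ref{bouL2} (a uniform spectral gap for $\mathcal{L}_2$ on $\{\varphi_{\pm}\}^{\perp}$) and of Lemma \ref{bouL3} (the solvability $\mathcal{L}_1 h = \varphi_{\pm}$ with $h = -d\varphi_{\pm}/dc$, so that $\varphi_{\pm} \in {\rm Range}(\mathcal{L}_1)$) carry over unchanged, since their proofs use only self-adjointness, the simplicity of the zero eigenvalue of $\mathcal{L}_2$, and the smooth dependence of $\varphi_{\pm}$ on $c$.

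The one genuinely new point, and the step I expect to be the main obstacle, is the determination of ${\rm z}(\mathcal{L}_1)$ in the even sector. In the symmetric setting of Theorem \ref{teo-1} the kernel of the linearization at the bifurcation point was one-dimensional, so upper semicontinuity of the kernel dimension gave ${\rm z}(\mathcal{L}_1) \leq 1$ at once. Here the kernel of the linearization $-\Delta - 2$ is two-dimensional, equal to ${\rm span}\{\cos(x+y), \cos(x-y)\}$ by \eqref{dker}, so the crude bound only yields ${\rm z}(\mathcal{L}_1) \leq 2$. To sharpen this I would exploit the explicit structure of the waves: since every non-constant harmonic in \eqref{expansion114} is a function of $x \pm y$ alone, one has $\varphi_{\pm} = f(x \pm y)$, where (by the local uniqueness of the branch, cf. Remark \ref{remunique}) $f$ is the $2\pi$-periodic solution of the one-dimensional profile equation $-2f'' + cf - f^{p+1} = 0$. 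Writing $\xi = x \pm y$ and $\eta = x \mp y$ one gets $\mathcal{L}_1 = -2\partial_{\xi}^2 - 2\partial_{\eta}^2 + c - (p+1)f(\xi)^p$, which commutes with $\partial_{\eta}$; decomposing into the Fourier modes $e^{in\eta}$ reduces the kernel equation to the one-dimensional Hill operators $\ell_n = -2\partial_{\xi}^2 + c + 2n^2 - (p+1)f^p$. The mode $n = 0$ contributes only the translational eigenfunction $f'(\xi)$, which is odd in $\xi$ and hence absent from $L_{\rm per,e}^2$; the modes $n = \pm 1$ yield a kernel element only when $-2 \in \sigma(\ell_0)$, a non-generic coincidence of the ground-state energy of $\ell_0$. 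This gives ${\rm z}(\mathcal{L}_1) \leq 1$ for the small-amplitude waves, so that ${\rm z}(\mathcal{L}) \leq 2$.

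With the kernel controlled, the conclusion follows as in Theorem \ref{teo-1} by computing the matrix $V$ of \eqref{V} on $\Ker(\mathcal{L}) = \Ker(\mathcal{L}_1) \oplus {\rm span}\{(0,\varphi_{\pm})\}$. Exactly as there, $J$ sends the two blocks into complementary components, so $V$ is block-diagonal: the block coming from $\Ker(\mathcal{L}_1) = {\rm span}\{v_1\}$ equals $(\mathcal{L}_2^{-1}v_1, v_1)_{L_{\rm per}^2}$, which is strictly positive because $\varphi_{\pm} \in {\rm Range}(\mathcal{L}_1)$ forces $(v_1, \varphi_{\pm})_{L_{\rm per}^2} = 0$ and Lemma \ref{bouL2} then applies, while the remaining entry is $(\mathcal{L}_1^{-1}\varphi_{\pm}, \varphi_{\pm})_{L_{\rm per}^2} < 0$ by Lemma \ref{bouL3}. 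Thus $V$ is invertible with ${\rm n}(V) = 1$, and since ${\rm n}(\mathcal{L}) = 1$ the hamiltonian Krein index formula \eqref{krein} gives $k_r + k_c + k_- = {\rm n}(\mathcal{L}) - {\rm n}(V) = 0$, hence $k_r = k_c = k_- = 0$ and spectral stability in the sense of Definition \ref{def-spectralstability}. In the generic case ${\rm z}(\mathcal{L}) = 1$ the matrix reduces to the single negative scalar $(\mathcal{L}_1^{-1}\varphi_{\pm}, \varphi_{\pm})_{L_{\rm per}^2}$, giving ${\rm n}(V) = 1$ again; moreover the same block structure shows that the conclusion ${\rm n}(V) = 1$ is robust even if the degenerate kernel failed to split, since the $\mathcal{L}_2^{-1}$-block stays positive definite while the $\varphi_{\pm}$-entry stays negative. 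The decisive and delicate part is therefore the symmetry-and-separation analysis of $\Ker(\mathcal{L}_1)$; once that is settled, the Krein-index bookkeeping is identical to the symmetric case.
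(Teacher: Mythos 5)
Your proposal reaches the right conclusion, but it differs from the paper on the one point you single out as decisive. The paper makes no attempt to prove ${\rm z}(\mathcal{L}_1)\leq 1$: it accepts the crude bound ${\rm z}(\mathcal{L}_1)\leq 2$ coming from \eqref{dker}, works with the resulting $3\times 3$ matrix $V$ in \eqref{matrixV33}, observes that the diagonal entries of the $2\times 2$ block $M$ in \eqref{matrixM22} are positive (by Lemma \ref{bouL2}, since each $v_{i,\pm}\perp\varphi_{\pm}$), and then rules out the case $\det(M)<0$ by contradiction: that case would give ${\rm n}(V)=2$, making the right-hand side of \eqref{krein} negative, which is impossible. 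Your closing observation --- that the $\mathcal{L}_2^{-1}$-block is positive \emph{definite} because ${\rm span}\{v_{1,\pm},v_{2,\pm}\}\subset\{\varphi_{\pm}\}^{\perp}$ and $\mathcal{L}_2^{-1}$ is positive on that subspace --- is actually a cleaner way to get ${\rm n}(V)=1$ directly, without the paper's case analysis on $\det(M)$; it is this remark, not the separation-of-variables analysis, that carries your proof. The rest of your bookkeeping (block-diagonality of $V$, $(v_{i,\pm},\varphi_{\pm})_{L^2_{\rm per}}=0$ from $\varphi_{\pm}\in{\rm Range}(\mathcal{L}_1)$, the negative entry from \eqref{derc1212}) matches the paper.

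The step you advertise as the main obstacle is, as written, both unnecessary and incompletely justified. First, the assertion that the modes $n=\pm1$ contribute no kernel element because $-2\in\sigma(\ell_0)$ is ``non-generic'' is not a proof: at $a=0$ the ground-state energy of $\ell_0$ is exactly $-2$ (this is precisely why $\cos(x-y)$ sits in the kernel of the linearization), so one must compute the $\mathcal{O}(a^2)$ shift of that eigenvalue and check it does not vanish --- you do not do this, and the paper deliberately avoids having to. Second, the reduction itself needs more care: $2\pi$-periodicity in $(x,y)$ does not become $2\pi$-periodicity in $(\xi,\eta)=(x\pm y,x\mp y)$ separately; the dual lattice consists of pairs of half-integers of equal parity, so the $\eta$-Fourier decomposition involves half-integer modes acting on anti-periodic $\xi$-profiles, not just the integer family $\ell_n=\ell_0+2n^2$. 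Neither issue is fatal to your overall argument, because your positive-definiteness fallback already covers ${\rm z}(\mathcal{L}_1)=2$; but if you keep the separation analysis you must either complete the perturbation computation or drop the claim ${\rm z}(\mathcal{L}_1)\leq 1$ and lean on the fallback, as the paper in effect does.
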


\begin{proof} We discuss the spectral stability of the waves  $\varphi_{\pm}$ restricted to the space $L_{\rm per,e}^2$. In fact, from $(\ref{dker})$, we obtain by continuity and a similar analysis as made in the proof of Theorem $\ref{teo-1}$, that ${\rm z}(\mathcal{L}_1)\leq 2$ for $c\sim \frac{2}{p}^{+}$. Since ${\rm z}(\mathcal{L}_2)=1$, we conclude that ${\rm z}(\mathcal{L})\leq 3$, and the matrix $V$ whose entries are given by $(\ref{V})$ has, in the worst case, order $3$. Consider, for $c\sim \frac{2}{p}^{+}$, $\{v_{1,\pm},v_{2,\pm}\}$ as an orthogonal basis for $\Ker(\mathcal{L}_1)$ for both cases $\varphi_{+}$ and $\varphi_{-}$. We have for instance that $V_{12}=(\mathcal{L}^{-1}J(v_{1,\pm},0),J(v_{2,\pm},0))_{L_{\rm per}^2}=(\mathcal{L}_2^{-1}v_{1,\pm},v_{2,\pm})_{L_{\rm per}^2}=(\mathcal{L}_2^{-1}v_{2,\pm},v_{1,\pm})_{L_{\rm per}^2}=V_{21}$, and the matrix $V$ then becomes
\begin{equation}\label{matrixV33}V=\left[\begin{array}{ccccc}(\mathcal{L}_2^{-1}v_{1,\pm},v_{1,\pm})_{L_{\rm per}^2}& & (\mathcal{L}_2^{-1}v_{1,\pm},v_{2,\pm})_{L_{\rm per}^2} & & 0\\\\
(\mathcal{L}_2^{-1}v_{2,\pm},v_{1,\pm})_{L_{\rm per}^2}	& & (\mathcal{L}_2^{-1}v_{2,\pm},v_{2,\pm})_{L_{\rm per}^2} & & 0\\\\
0 & & 0 & & (\mathcal{L}_1^{-1}\varphi_{\pm},\varphi_{\pm})_{L_{\rm per}^2}
\end{array}
\right].\end{equation}
By Lemma $\ref{bouL2}$, we see that $$(\mathcal{L}_2^{-1}v_{i,\pm}, v_{i,\pm})_{L_{\rm per}^2} > 0,\ \ \mbox{for}\ i = 1, 2.$$ This fact occurs since $(v_{i,\pm}, \varphi_{\pm})_{L_{\rm per}^2} = (\mathcal{L}_1 v_{i,\pm}, h)_{L_{\rm per}^2} = 0$, where $h \in H_{\rm per,e}^2$ satisfies $\mathcal{L}_1 h = \varphi_{\pm}$. Thus, we have
\begin{equation}\label{detV}
\det(V)=\det(M)(\mathcal{L}_1^{-1}\varphi_{\pm},\varphi_{\pm})_{L_{\rm per}^2},
\end{equation}
where $M$ is the matrix given by 
\begin{equation}\label{matrixM22}M=\left[\begin{array}{ccccc}(\mathcal{L}_2^{-1}v_{1,\pm},v_{1,\pm})_{L_{\rm per}^2}& & (\mathcal{L}_2^{-1}v_{1,\pm},v_{2,\pm})_{L_{\rm per}^2}\\\\
		(\mathcal{L}_2^{-1}v_{2,\pm},v_{1,\pm})_{L_{\rm per}^2}	& & (\mathcal{L}_2^{-1}v_{2,\pm},v_{2,\pm})_{L_{\rm per}^2}
	\end{array}
	\right].\end{equation}
\indent Assume that $\det(M) \geq 0$. Since $(\mathcal{L}_2^{-1}v_{1,\pm}, v_{1,\pm})_{L_{\rm per}^2}$ and $(\mathcal{L}_2^{-1}v_{2,\pm}, v_{2,\pm})_{L_{\rm per}^2}$ are positive numbers, we obtain that ${\rm n}(V) = 1$, and the wave $\varphi_{\pm}$ is spectrally stable in $L_{\rm per,e}^2$. Now, if $\det(M) < 0$, we see that $M$ has a positive and a negative eigenvalue, and thus, since $(\mathcal{L}_1^{-1}\varphi_{\pm}, \varphi_{\pm})_{L_{\rm per}^2} < 0$, we obtain that ${\rm n}(V) = 2$. However, this scenario cannot occur since ${\rm n}(\mathcal{L}) = 1$, and by $(\ref{krein})$, we have a contradiction. Therefore, $\det(M)\geq0$ and both $\varphi_{+}$ and $\varphi_{-}$ are spectrally stable in $L_{\rm per,e}^2$.\\
\indent The spectral stability in the cases ${\rm z}(\mathcal{L})=1$ and ${\rm z}(\mathcal{L})=2$ is determined similarly to the proof of Theorem $\ref{teo-1}$ and by using $(\ref{derc1212})$. Therefore, further details are omitted. 
\end{proof}

	\begin{remark}
	As in Remark $\ref{remstab}$, we cannot conclude (using a similar analysis) the orbital stability of the small-amplitude periodic waves $\varphi_{\pm}$ given by $(\ref{expansion1})-(\ref{expansion3})$ and $(\ref{expansion114})$ in the cases ${\rm z}(\mathcal{L})=2$ and ${\rm z}(\mathcal{L})=3$. The orbital stability can be concluded, using \cite{GrillakisShatahStraussI}, only in the case ${\rm z}(\mathcal{L})=1$.
	\end{remark}

\subsection*{Funding}
F. Natali is partially supported by CNPq/Brazil (grant 303907/2021-5). \\

\subsection*{Data availability} Data sharing not applicable to this article as no datasets were generated or analysed during the current study.\\

\begin{flushleft}
	\textbf{Declarations.}
\end{flushleft}
\subsection*{Conflict of interest} The author declares that he has no conflict
of interest.
\subsection*{Ethics declaration} Not applicable.

\end{document}